\theoremstyle{plain}
\newtheorem{lemma}{Lemma}[section]
\newtheorem{theo}{Theorem}[section]
\newtheorem{assump}{\bf{Assumption}}[section]
\theoremstyle{remark}
\newtheorem{remark}{\bf{Remark}}[section]
\theoremstyle{remark}
\theoremstyle{remark}
\numberwithin{equation}{section}
\newcommand*\diff{\mathop{}\!\mathrm{d}}
\newcommand{\ie}{{\it i.e. \/}}
\newcommand{\R}{{\mathbb{R}}}
\newcommand{\Z}{{\mathbb{Z}}}
\newcommand{\N}{{\mathbb{N}}}
\newcommand{\p}{\mathcal{P}}
\newcommand{\dt}{\partial_t}
\newcommand{\dnz}[1]{\partial^{#1}_z}
\newcommand{\Dx}{\Delta x}
\newcommand{\Dt}{\Delta t}
\newcommand{\Dv}{\Delta v}
\newcommand{\tn}{t^n}
\newcommand{\x}{x_i}
\newcommand{\Uniz}{U^n_i(z)}
\newcommand{\Unpiz}{U^{n+1}_i(z)}
\newcommand{\Unimz}{U^n_{i-1}(z)}
\newcommand{\UniK}{U^n_{i,(K)}}
\newcommand{\bhUnpi}{\hat{\bm{U}}_i^{n+1}}
\newcommand{\bhUni}{\hat{\bm{U}}^n_i}
\newcommand{\bhUnim}{\hat{\bm{U}}^n_{i-1}}
\newcommand{\hUnik}{\hat{U}^n_{i,(k)}}
\newcommand{\lpm}{\lambda^{\pm}}
\newcommand{\lpmz}{\lambda^{\pm}(z)}
\begin{document}

\title{\bf The Discrete Stochastic Galerkin Method for Hyperbolic Equations
with Non-smooth and Random Coefficients\footnote{This research was supported by NSFC grant No. 91330203, NSF
grants DMS-1522184 and DMS-1107291: RNMS KI-Net, and by the Office
of the Vice Chancellor for Research and Graduate Education at the University of Wisconsin-Madison with
funding from the Wisconsin Alumni Research Foundation.}}

\vspace{1cm}
\author{Shi Jin\footnote{ Institute of Natural Sciences, Department of Mathematics, MOE-LSEC and SHL-MAC, Shanghai Jiao Tong University, Shanghai 200240, China and Department of Mathematics, University of Wisconsin-Madison, Madison, WI 53706, USA (jin@math.wisc.edu) and .} \ and Zheng Ma\footnote{Department of Mathematics, Shanghai Jiao Tong University, Shanghai 200240, China.}}

\maketitle

\begin{abstract}
\noindent 
We develop a general polynomial chaos (gPC) based stochastic Galerkin (SG) for
hyperbolic equations with random and singular coefficients. Due to the singular nature of the solution, the standard gPC-SG methods may suffer from a poor or even
non convergence. Taking advantage of the fact that the discrete solution,
by the central type  finite difference or finite volume approximations
in space and time for example, 
is smoother, we first discretize the equation by a smooth finite difference or
finite volume scheme, and then use the gPC-SG approximation to the discrete
system. The jump condition at the interface is treated using the immersed
upwind methods introduced in \cite{Jin:2009pro, Wen:2005ueba}.
This yields a method that converges with the spectral accuracy
for finite mesh size and time step.  We use a linear hyperbolic equation
with discontinuous and random coefficient, and the Liouville equation with
discontinuous and random potential, to illustrate our idea, with both one
and second order spatial discretizations. Spectral convergence is established 
for the first equation, and numerical examples for both equations show the
desired accuracy of the method.

\bigskip

\noindent{\bf Key words.} hyperbolic equation, random coefficient, potential barrier, stochastic Galerkin, polynomial chaos\\
{\bf AMS subject classifications.} 35L02, 65M06, 65M60, 65C30\\
\end{abstract}

\section{Introduction}\label{intro}
We are interested in developing efficient numerical methods to solve
linear hyperbolic equations with non-smooth and uncertain coefficients. Such problems arise in wave propagation in heterogeneous media, through interfaces between different media, or potential barriers,  making the coefficients in
these equations discontinuous or even more singular. Random uncertainties arise
due to modeling or experiment errors. These errors are inevitable since the
fluxes in  hyperbolic equations are often given by {\it empirical} laws, equations of state or
moment closures which are often {\it ad hoc}.

When hyperbolic equations contain singular coefficients, one usually needs to
provide an extra physical condition at the singular points to make the 
initial or boundary value problems well-posed and to account for 
the correct physics of waves at the interface or barrier~\cite{Wen:2005ueba, Jin:2009pro}.
In the case of potential barriers, a natural physical condition is the transmission
and reflection conditions,  and such conditions
can be built into the numerical fluxes in a natural way, in the framework of
the Hamiltonian-Preserving schemes~\cite{Wen:2005ueba, JinWen-wave}. 
This is the approach we will take to tackle the problems with singular coefficients.

To handle the difficulty induced by the random uncertainties, we will utilize the generalized
polynomial chaos (gPC) expansion based stochastic Galerkin (SG) method~\cite{Bijl:2013hkba, GS, GWZ, LMK, PIN, Tryoen:2010djba, Xiu:2010wxba, XiuKar}.
Such methods outperform the classical Monte-Carlo method in that, given
sufficient regularity of the solution in the random space, they can achieve
the spectral convergence, thus are  much more efficient  for
problems with random uncertainties. Unfortunately, for hyperbolic problems,
one often is not blessed with such regularities, which leads to 
significant reduction of order of
convergence~\cite{Motamed:2012gsba, Tang:2012ibba},
 thus slows down the computation or even gives 
non-convergent results due to Gibbs' phenomenon.  The problems under study in this paper are  problems with discontinuous solutions in the random space, due to jumps of solutions formed at the interfaces or barriers which will propagate
into the random space.

A standard gPC-SG method begins with a gPC approximation of the original
differential equation in the random space,  yielding a {\it deterministic} system of equations
for the gPC coefficients (while the randomness is built into the
basis functions which are orthonormal polynomials), which is then discretized
by standard schemes (finite difference, finite volume, finite element, or
spectral methods) in space and time. The gPC approximation
is accurate if solutions to the original problem are smooth in the random
space. This is not the case for the problems under study.

Our central idea in this paper is to {\it reverse} the above gPC-SG process.  Namely,
we first discretize the original equation in space and time, using {\it smooth} numerical fluxes, and then apply the gPC approximation to this discrete
equation.  Since the discrete solution is more regular than the continuous one,
the gPC approximation is applied to a smoother function (for fixed time step
and mesh  size), thus one expects a better convergence rate. We refer to
such gPC-SG methods as the {\it discrete gPC-SG methods}.

For hyperbolic equations, the smooth numerical fluxes are usually
central differences which do not depend on the characteristic information (for examples the Lax-Friedrichs, the Lax-Wendroff scheme, 
etc.). The upwind type schemes are not smooth, since they depend on the sign of
 the absolute value of the characteristic speeds thus do not yield smooth 
numerical fluxes.
 For second order scheme, in order to
suppress numerical viscosity, one usually uses slope limiters or ENO or
WENO type reconstruction~\cite{LeVeque, Tadmor:2000jlba, Tadmor:1990vlba} which in general are not smooth functions.
In order to keep the numerical flux smooth, we use the smooth BAP slope limiter
introduced in~\cite{Liu:1998faba}.

In this paper we will develop this idea for two problems. The first
is a scalar hyperbolic equation with a discontinuous and random coefficient:
\begin{equation}
\label{conv-eq}
  u_t(x, t, z) + \big[c(x,z)u(x,t,z)\big]_x = 0, \quad t > 0.
\end{equation}
Here $c(x,z)$ is the random coefficient where $z$ is a random variable in a properly defined complete random space with event space $\Omega$ and probability distribution function $\rho(z)$. $c(x,z)$ is discontinuous respect to $x$, which corresponds to an interface between different media. The second is the Liouville equation for the particle density distribution $u(x,t,z)>0$:
\begin{equation}\label{eq_Liou}
  u_t + v  u_x - V_x  u_v = 0, \quad t>0, \quad x, v \in \R,
\end{equation}
in which the potential function $V(x,z)$ may be discontinuous in $x$, corresponding to a potential barrier. 
The quantities of interest to be computed in these problems include the expectation of $u$,
\begin{equation}\label{exp}
  \mathbb{E}[u] = \int u(z) \rho(z)\diff z.
\end{equation}
and its variance
\begin{equation}\label{var}
  \mathbb{V}[u]:=\mathbb{E}\big[(u-\mathbb{E}(u))^2\big] = \int u(z)^2  \rho(z) \diff z - (\mathbb{E}[u])^2
\end{equation}

For equation~(\ref{conv-eq}), by using the Lax-Friedrichs scheme followed
by the gPC-SG approximation,  we will establish the regularity and consequently the spectral convergence of the proposed method in
the random space, while the numerical convergence in space and time is the
same as the deterministic problem established in~\cite{Qi:2013byba}.
The error will be verified numerically, for both the convection and the Liouville equations. 

In such problems the uncertainty may also come from the initial data. This is
a well-studied problem~\cite{GX, Tang:2012ibba} and our method can obviously be used in this case.

The paper is organized as follows. In Section 2, we will present the
discrete gPC-SG method for  the convection equation~(\ref{conv-eq}),
and conduct the regularity and numerical convergence analysis for the fully
discrete scheme.  In Section 3, we will show how to use this idea for the Liouville equation for both first  and  second order spatial discretizations.
 In Section 4, we will present numerical examples for both equations that will show an exponential convergence in the random space.

\section{A Discrete gPC-SG scheme for convection equation with discontinuous wave speed}


We first consider a scalar model convection equation 
\begin{equation}\label{eq:convection}
  \left\{
  \begin{aligned}
    &u_t(x,t,z) + \big[c(x,z)u(x,t,z)\big]_x = 0,\quad t>0, \\
    &u(x, 0, z) = u_0(x, z).
  \end{aligned}
  \right.
\end{equation}
Here we consider the case that $c(x, z)$ can be discontinuous with respect to $x$ at some point, for example,
\begin{equation}
  c(x, z) = 
  \begin{cases}
    c^-(z)>0, & \text{if $x<0$}, \\
    c^+(z)>0, & \text{if $x>0$}.
  \end{cases}
\end{equation}
As in \cite{Jin:2009pro}, an interface condition at $x = 0$ is needed to make the problem well-posed:
\begin{equation}\label{interface_1}
  u(0^-,t, z) = \alpha(z)u(0^+,t,z).
\end{equation}
where $\alpha(z) = 1$ corresponds to conservation of mass or $\alpha(z) = c^-(z) / c^+(z)$ for the conservation of flux which is the case we will use in the sequel. Notice that here we assume $c(x,z)$ is smooth enough with respect to the random variable $z$, and only has one discontinuous point at $x = 0$.

The discontinuity of $u(x,t,z)$ generated by the interface condition (\ref{interface_1}) will propagate into the random space, preventing the gPC method from
 high order convergence due to Gibb's phenomenon. Here we propose a slightly different approach from the traditional gPC method: We first discretize equation (\ref{eq:convection}) in space and time as done in~\cite{Qi:2013byba} with the random variable $z$ as a fixed parameter. A key idea in~\cite{Qi:2013byba} is to ``immerse'' the interface condition~(\ref{interface_1}) into the scheme. The gPC method will then be applied to the discrete system.

\subsection{The scheme}

Let the spatial mesh be $\x = i\Dx$, where $i\in\Z$, the set of all integers, and $\Dx$ is the mesh size. Let $\tn = n\Dt$ be the discrete time where $\Dt$ is the time step. Let $\Uniz = U(\x,\tn,z)$ be the numerical approximation of $u(\x, \tn, z)$. The immersed upwind scheme proposed by Jin and Qi in \cite{Qi:2013byba}, for (\ref{eq:convection}) (\ref{interface_1}) is
\begin{equation}\label{dis_scheme}
  \left\{
  \begin{aligned}
    \Unpiz &= (1 - \lambda^-(z))\Uniz+\lambda^-(z)\Unimz, &\quad&\text{if $i\leq 0$}, \\
    \Unpiz &= (1 - \lambda^+(z))\Uniz+\lambda^-(z)\Unimz, &&\text{if $i = 1$}, \\
    \Unpiz &= (1 - \lambda^+(z))\Uniz+\lambda^+(z)\Unimz, &&\text{if $i\geq 2$}, 
  \end{aligned}
  \right.
\end{equation}
where $\lambda^{\pm}(z) = c^{\pm}(z) \Dt / \Dx$.

Notice, from this discrete scheme (\ref{dis_scheme}), if one assumes that 
$\Uniz$ is a smooth function of $z$ for each $i$, then after one time step,  $\Unpiz$ is still a smooth function of $z$. The reason is simple: $\lpmz = c^{\pm}(z) \Dt / \Dx$ is a smooth function of $z$! Since we assume the initial data is smooth with respect to $z$, the numerical solution at any time $\tn$ should also be smooth with respect to $z$. Then if applying the standard gPC Galerkin method to this discrete system, one can expect a fast convergence of gPC expansion to this discrete solution when the physical mesh size $\Dx$ and $\Dt$ are fixed. 

 First we recall the scheme:
  \begin{equation}\label{dis_scheme_p}
    \left\{
    \begin{aligned}
      \Unpiz &= (1 - \lambda^-(z))\Uniz+\lambda^-(z)\Unimz &\quad& \text{if $i\leq 0$}, \\
      \Unpiz &= (1 - \lambda^+(z))\Uniz+\lambda^-(z)\Unimz && \text{if $i = 1$}, \\
      \Unpiz &= (1 - \lambda^+(z))\Uniz+\lambda^+(z)\Unimz && \text{if $i\geq 2$}. 
    \end{aligned}
    \right.
  \end{equation}
following the standard gPC Galerkin framework, we apply the gPC expansion of $z$ to $\Uniz$. Namely, we seek an approximate solution in the form of gPC expansion, \ie
\begin{equation}\label{gPC_exp}
  \UniK(z) = \sum_{k=0}^K \hUnik P_k(z),
\end{equation}
where $P_k(z)$ form an orthonormal polynomial basis with weights $\rho(z)$, and the degree of $P_k(z)$ of $k$ satisfying 
\begin{equation}
  \langle P_i, P_j \rangle = \int P_i(z)P_j(z)\rho(z)\diff z = \delta_{ij},
\end{equation}
with the weighted inner product defined as
\begin{equation}
  \langle f, g \rangle = \int f(z) g(z)\rho(z)\diff z,
\end{equation}
and $\delta_{ij}$ is the Kronecker delta function. The expansion coefficients are determined as
\begin{equation}
  \hUnik = \int U^n_i(z)P_k(z)\rho(z)\diff z.
\end{equation}

By utilizing the expansion~(\ref{gPC_exp}) and employing a Galerkin projection,  the coefficients $\hUnik$ satisfy the following system of equations
\begin{equation}\label{dis_gPC}
  \left\{
  \begin{aligned}
    \bhUnpi &= (1 - \bm{\lambda}^-) \bhUni + \bm{\lambda}^- \bhUnim, &\quad&\text{if $i\leq 0$}, \\
    \bhUnpi &= (1 - \bm{\lambda}^+) \bhUni + \bm{\lambda}^- \bhUnim, &&\text{if $i=1$}, \\
    \bhUnpi &= (1 - \bm{\lambda}^+) \bhUni + \bm{\lambda}^+ \bhUnim, &&\text{if $i\geq 2$}. 
  \end{aligned}
  \right.
\end{equation}
Here $\bhUni = (\hat{U}^n_{i, (0)}, \dotsc, \hat{U}^n_{i, (K)})^T$ is a
vector of dimension $(K+1)$, and $\bm{\lambda}^{\pm}$ are the $(K+1) \times (K+1)$ matrices whose entries are $\{\lambda^{\pm}_{k,m}\}_{0 \leq k, m \leq K}$ where
\begin{equation}
  \lpm_{k,m} = \int c^{\pm}(z) P_k(z) P_m(z) \rho(z)\diff z\,.
\end{equation}

\subsection{The  error estimate and convergence analysis}


We first introduce some notations, spaces and norms that will be used for
our analysis. We assume that $u(x,t,z)$ has a compact support in the domain $D = [a, b]$, where $a < 0$ and $b > 0$ such that the domain includes the interface $x = 0$. $-M\leq i\leq M$ is the spatial discretization index and $\Dx = (b - a)/(2M+1)$. The time step index is $n = 0, 1, \dotsc$. 

Define a weighed $L^2$ norm on the random space $\Omega$,
\begin{equation}
  \|f(\cdot)\|^2_{L^2(\Omega)} = \int f^2(z)\rho(z)\diff z.
\end{equation}
We also define the norm
\begin{equation}
  \|u^n(\cdot)\|_H^2 := \int\|u^n(z)\|_{l^1(D)}^2\rho(z)\diff z,
\end{equation}
where 
\begin{equation}
  \|u^n(z)\|_{\ell^1(D)} = \sum_{i=-M}^M|u^n_i(z)|\Dx.
\end{equation}

\subsubsection{Regularity of the discrete solution in the random space}

In order to obtain the error estimate, we need to investigate the regularity of discrete solution
$\Uniz$ in the random space. It is natural that some assumptions for the given data will be made. More precisely, we make the following assumptions (see~\cite{Tang:2012ibba,Zhou:2010gwba}).
\begin{assump}
  \begin{equation}\label{assu}
    \underset{z\in\Omega}{\max}|\dnz{s} \lambda^{\pm}(z)|\leq\gamma_\ell, \quad\underset{D\otimes\Omega}{\max}|\dnz{s} u_0(x,z)|\leq \eta_\ell, \quad \forall 0\leq s\leq\ell,
  \end{equation}
  where $0\leq\lambda^{\pm}(z)=c^{\pm}(z)\Dt/\Dx\leq 1$ and $\ell = 1, 2, \dotsc$ and $\gamma$, $\eta$ are positive constants. Without loss of generality, we also assume a bounded constant $\tau = \max\big\{\gamma_\ell, \eta_\ell, 1\big\}$.
\end{assump}

Note that in (\ref{assu}) the constants $\gamma_\ell$ and $\eta_\ell$ are independent of $x$. We are now ready to state and prove the following regularity result.
\begin{theo}\label{regularity}
  Under Assumption (\ref{assu}), the discrete solution $U^n_i(z)$ have properties
  \begin{equation}
\label{reg}
    \underset{i\in\N, z\in\Omega}{\max}|\dnz{\ell} \Uniz|\leq C_\ell(n)(2\tau)^n \tau,
  \end{equation}
  for $\forall\ell\in\N$, where
  \begin{equation}\label{cln}
    C_\ell(n) = \sum\limits^n_{s=0}\binom{n}{s}(1+s)^\ell\leq 2^{(\ell+1)n}.
  \end{equation}
\end{theo}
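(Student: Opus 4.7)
The plan is to proceed by induction on the time index $n$, using the fact that each update in~(\ref{dis_scheme_p}), regardless of the spatial region, has the common form
\begin{equation*}
U^{n+1}_i(z)=A_i(z)\,U^n_i(z)+B_i(z)\,U^n_{i-1}(z),
\end{equation*}
with $(A_i,B_i)\in\bigl\{(1-\lambda^-,\lambda^-),\,(1-\lambda^+,\lambda^-),\,(1-\lambda^+,\lambda^+)\bigr\}$. Assumption~(\ref{assu}) ensures $|A_i|,|B_i|\le 1\le\tau$ and $|\dnz{k}A_i|,|\dnz{k}B_i|\le\gamma_k\le\tau$ for every $k\ge 1$, uniformly in $i$ and $z$. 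The base case $n=0$ is immediate: $|\dnz{\ell}U^0_i(z)|=|\dnz{\ell}u_0(\x,z)|\le\eta_\ell\le\tau=C_\ell(0)(2\tau)^0\tau$.

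For the inductive step, I will differentiate the update $\ell$ times in $z$ via the Leibniz rule,
\begin{equation*}
\dnz{\ell}U^{n+1}_i=\sum_{k=0}^{\ell}\binom{\ell}{k}\Bigl[\dnz{k}A_i\cdot\dnz{\ell-k}U^n_i+\dnz{k}B_i\cdot\dnz{\ell-k}U^n_{i-1}\Bigr],
\end{equation*}
bound each coefficient factor by $\tau$, and set $M^n_\ell:=\max_{i,z}|\dnz{\ell}U^n_i(z)|$ to obtain the one-line recursion
\begin{equation*}
M^{n+1}_\ell\le 2\tau\sum_{k=0}^{\ell}\binom{\ell}{k}M^n_{\ell-k}.
\end{equation*}
Plugging the inductive hypothesis $M^n_{\ell-k}\le C_{\ell-k}(n)(2\tau)^n\tau$ into the right-hand side and factoring $(2\tau)^{n+1}\tau$ out front reduces everything to the combinatorial inequality $\sum_{k=0}^{\ell}\binom{\ell}{k}C_{\ell-k}(n)\le C_\ell(n+1)$.

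The main obstacle is verifying this combinatorial inequality cleanly. My approach is to open the definition of $C_{\ell-k}(n)$, swap the order of summation, and apply the binomial theorem $\sum_{k=0}^{\ell}\binom{\ell}{k}(1+s)^{\ell-k}=(2+s)^\ell$ to collapse the inner sum, producing
\begin{equation*}
\sum_{k=0}^{\ell}\binom{\ell}{k}C_{\ell-k}(n)=\sum_{s=0}^{n}\binom{n}{s}(2+s)^\ell.
\end{equation*}
On the other side, Pascal's rule applied inside $C_\ell(n+1)$ followed by an index shift $s\mapsto s+1$ in the $\binom{n}{s-1}$ piece delivers $C_\ell(n+1)=C_\ell(n)+\sum_{s=0}^{n}\binom{n}{s}(2+s)^\ell$, so the required inequality holds with a full $C_\ell(n)$ to spare. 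Finally, the crude estimate $C_\ell(n)\le(n+1)^\ell\cdot 2^n$ together with $n+1\le 2^n$ (trivial at $n=0$, by induction otherwise) yields $C_\ell(n)\le 2^{(\ell+1)n}$, completing the proof of~(\ref{reg}).
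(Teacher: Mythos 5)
Your proposal is correct and follows essentially the same route as the paper: induction on $n$, Leibniz differentiation of the update in $z$, and uniform bounds on $\lambda^{\pm}$ and its derivatives from Assumption (\ref{assu}), leading to the recursion $C_\ell(n+1)=C_\ell(n)+\sum_{s=0}^{\ell}\binom{\ell}{s}C_{\ell-s}(n)$ (your grouping $A_iU^n_i+B_iU^n_{i-1}$ merely drops the harmless extra $C_\ell(n)$ term). You additionally verify, via the binomial theorem and Pascal's rule, the closed form $C_\ell(n)=\sum_{s=0}^n\binom{n}{s}(1+s)^\ell$ that the paper asserts without detail, which is a welcome completion rather than a deviation.
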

\begin{proof}
  Differentiating scheme (\ref{dis_scheme}) $\ell$ times with respect to $z$,
  \begin{equation*}
    \left\{
    \begin{aligned}
      \dnz{\ell}\Unpiz &= \dnz{\ell}\Uniz - \sum\limits_{s=0}^\ell\binom{\ell}{s}\dnz{\ell-s} \lambda^-(z)\dnz{s}\Uniz+\sum\limits_{s = 0}^\ell\binom{\ell}{s}\dnz{\ell-s} \lambda^-(z)\dnz{s}\Unimz &\quad&\text{if $i\leq 0$}, 
      \\
      \dnz{\ell}\Unpiz &= \dnz{\ell}\Uniz  - \sum\limits_{s = 0}^l\binom{\ell}{s}\dnz{\ell-s} \lambda^+(z)\dnz{s}\Uniz+\sum\limits_{s = 0}^l\binom{\ell}{s}\dnz{\ell-s} \lambda^-(z)\dnz{s}\Unimz &&\text{if $i=1$}, 
      \\
      \dnz{\ell}\Unpiz &= \dnz{\ell}\Uniz  - \sum\limits_{s = 0}^\ell\binom{\ell}{s}\dnz{\ell-s} \lambda^+(z)\dnz{s}\Uniz+\sum\limits_{s = 0}^l\binom{\ell}{s}\dnz{\ell-s} \lambda^+(z)\dnz{s}\Unimz && \text{if $i\geq 2$}.
    \end{aligned}
    \right.
  \end{equation*}
  We will use the mathematical induction on the index $n$. When $n = 1$ which means after the first step, one has
  \begin{equation*}
    \left\{
    \begin{aligned}
      \dnz{\ell} U^1_i (z) &= \dnz{\ell} U^0_i(z) - \sum\limits_{s=0}^\ell\binom{\ell}{s}\dnz{\ell-s} \lambda^-(z)\dnz{s} U^0_i(z)+\sum\limits_{s = 0}^\ell\binom{\ell}{s}\dnz{\ell-s} \lambda^-(z)\dnz{s} U^0_{i-1}(z) &\quad&\text{if $i\leq 0$}, 
      \\
      \dnz{\ell} U^1_i (z) &= \dnz{\ell} U^0_i(z)  - \sum\limits_{s = 0}^l\binom{\ell}{s}\dnz{\ell-s} \lambda^+(z)\dnz{s} U^0_i(z)\sum\limits_{s = 0}^l\binom{\ell}{s}\dnz{\ell-s} \lambda^-(z)\dnz{s} U^0_{i-1}(z) &&\text{if $i=1$}, 
      \\
      \dnz{\ell} U^1_i (z) &= \dnz{\ell} U^0_i(z)  - \sum\limits_{s = 0}^\ell\binom{\ell}{s}\dnz{\ell-s} \lambda^+(z)\dnz{s} U^0_i(z)+\sum\limits_{s = 0}^l\binom{\ell}{s}\dnz{\ell-s} \lambda^+(z)\dnz{s} U^0_{i-1}(z) && \text{if $i\geq 2$}.
    \end{aligned}
    \right.
  \end{equation*}
  With Assumption (\ref{assu}),
  \begin{equation}
    \underset{i\in\N, z\in\Omega}{\max}|\dnz{s} U^0_i(z)|= \underset{i\in\N, z\in\Omega}{\max}|\dnz{s} u_0(x_i,z)|\leq\underset{D\otimes\Omega}{\max}|\dnz{s} u_0(x,z)|\leq \tau,
  \end{equation}
  and
  \begin{equation}
    \underset{z\in\Omega}{\max}|\dnz{\ell-s} \lambda^{\pm}(z)|\leq\tau.
  \end{equation}
  So one has
  \begin{equation}
    \begin{split}
      \underset{i\in\N, z\in\Omega}{\max}|\dnz{\ell} U^1_i(z)|\leq & {} \underset{i\in\N, z\in\Omega}{\max}|\dnz{\ell} U^0_i(z)| 
      + \sum_{s = 0}^\ell\binom{\ell}{s}\underset{z\in\Omega}{\max}|\dnz{\ell-s} \lambda^{\pm}(z)|\underset{i\in\N, z\in\Omega}{\max}|\dnz{s} U^0_i(z)|
      \\
      & {} + \sum_{s = 0}^\ell\binom{\ell}{s}\underset{z\in\Omega}{\max}|\dnz{\ell-s} \lambda^{\pm}(z)|\underset{i\in\N, z\in\Omega}{\max}|\dnz{s} U^0_{i-1}(z)| 
      \\
      \leq & {} \tau + 2\tau^2\sum_{s=0}^l\binom{\ell}{s}
      \leq  {} 2\tau(2^\ell + 1)\tau,
    \end{split}
  \end{equation}
  which satisfies (\ref{reg}) for $n=1$.
  
  Next we assume when $n = p$, the derivatives satisfy (\ref{reg}):
  \begin{equation}
    \underset{i\in\N, z\in\Omega}{\max}|\dnz{\ell} U^p_i(z)|\leq C_\ell(p)(2\tau)^p\tau, \quad\forall \ell\in\N.
  \end{equation}  
  Then for index $n = p+1$, using the same procedure as above,
  \begin{equation}
    \begin{split}
      \underset{i\in\N, z\in\Omega}{\max}|\dnz{\ell} U^{p+1}_i(z)| \leq & {} \underset{i\in\N, z\in\Omega}{\max}|\dnz{\ell} U^p_i(z)| + \sum_{s = 0}^\ell\binom{\ell}{s}\underset{z\in\Omega}{\max}|\dnz{\ell-s} \lambda^{\pm}(z)|\underset{i\in\N, z\in\Omega}{\max}|\dnz{s} U^p_i(z)|
      \\
      & {} + \sum_{s = 0}^\ell\binom{\ell}{s}\underset{z\in\Omega}{\max}|\dnz{\ell-s} \lambda^{\pm}(z)|\underset{i\in\N, z\in\Omega}{\max}|\dnz{s} U^p_{i-1}(z)|
      \\
      \leq & {} C_\ell(p)(2\tau)^p\tau + 2\sum_{s=0}^\ell\binom{\ell}{s}\tau C_{\ell-s}(p)(2\tau)^p\tau
      \\
      \leq & {} \left(C_\ell(p) + \sum_{s=0}^\ell\binom{\ell}{s}C_{\ell-s}(p)\right)(2\tau)^{p+1}\tau
      \\
      := & {} C_\ell(p+1)(2\tau)^{p+1}\tau.
    \end{split}
  \end{equation}
  From the last equality one gets the recursive relation of $C_\ell(p)$,
  \begin{equation}
    C_\ell(n+1) = C_\ell(n) + \sum\limits^\ell_{s=0}\binom{\ell}{s}C_{\ell-s}(n),
  \end{equation}
  and by the mathematical induction one can find
  \begin{equation}
    C_\ell(n) = \sum\limits^n_{s=0}\binom{n}{s}(1+s)^\ell,
  \end{equation}
  which is the desired result.
\end{proof}
\begin{remark}
  The coefficient
  \begin{equation}\label{Cnl}
    C_\ell(n) = \sum\limits^n_{s=0}\binom{n}{s}(1+s)^\ell \leq 2^n(1+n)^\ell\leq 2^{(\ell+1)n}.
  \end{equation}
  For a given final time $T = n\Dt$,
  \begin{equation}
    C_\ell(n)\leq 2^{\frac{T}{\Dt}}\left(1+\frac{T}{\Dt}\right)^\ell\leq 2^{\frac{(\ell+1)T}{\Dt}}.
  \end{equation}
\end{remark}

\subsubsection{The  spectral convergence of the gPC Galerkin method}
Let $\Uniz$ be the solution to the linear convection equation~(\ref{dis_scheme}). We define the $K$th order projection operator
\begin{equation}
  \p_K \Uniz = \sum_{k=0}^K \big\langle \Uniz, P_k(z) \big\rangle P_k(z).
\end{equation}
The error arisen from the gPC-SG can be split into two parts $r^n_{i,(K)}(z)$ and $e^n_{i,(K)}(z)$,
\begin{equation}\label{split}
  \begin{split}
    \Uniz - \UniK(z) & = \Uniz - \p_K \Uniz + \p_K \Uniz - \UniK(z)\\
    & := r^n_{i,(K)}(z) + e^n_{i,(K)}(z),
  \end{split}
\end{equation}
where $r^n_{i,(K)}(z) = \Uniz - \p_K \Uniz$ is the interpolation error, and $e^n_{i,(K)}(z)=\p_K \Uniz - \UniK(z)$ is the projection error.

For the interpolation error $r^n_{i,(K)}(z)$, we have the following lemma,
\begin{lemma}[{\bf Interpolation error}]\label{tr_e}
  Under  Assumption (\ref{assu}),  for a given final time $T = n\Dt$ and any given integer $\ell\in\N$,
  \begin{equation}
    \|r^n_{\cdot,(K)}(\cdot)\|_H \leq \frac{(b-a)C_\rho (2^{\ell+2}\tau)^n\tau}{K^\ell}, \quad\forall\ell\in\N,
  \end{equation}
  where $C_\rho$ is a constant depends on the orthogonal polynomials $\{P_k(z)\}_{k\in\N}$ .
\end{lemma}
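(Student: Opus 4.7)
The plan is to first apply the pointwise (in the spatial index $i$) spectral accuracy of the orthonormal polynomial projection $\p_K$ in $L^2_\rho$, then upgrade to the mixed norm $\|\cdot\|_H$ by pulling the spatial $\ell^1$ sum outside the random $L^2$ integral via Minkowski, and finally close the estimate by invoking the regularity bound from Theorem \ref{regularity}.

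For the first ingredient, I would cite the standard spectral convergence estimate for truncated generalized polynomial chaos expansions: for any $f(z)$ with $\ell$ derivatives in $z$,
\begin{equation*}
  \|f - \p_K f\|_{L^2(\Omega)} \leq \frac{C_\rho}{K^\ell} \|\dnz{\ell} f\|_{L^2(\Omega)},
\end{equation*}
where $C_\rho$ depends only on the orthogonal family $\{P_k\}$ determined by $\rho$. Applied pointwise in the grid index $i$ to $\Uniz$, this gives $\|r^n_{i,(K)}(\cdot)\|_{L^2(\Omega)} \leq C_\rho K^{-\ell}\|\dnz{\ell}\Uniz\|_{L^2(\Omega)}$. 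By Theorem \ref{regularity} and the fact that $\int \rho(z)\diff z = 1$, the right-hand side is bounded uniformly in $i$ by $C_\rho K^{-\ell} C_\ell(n)(2\tau)^n\tau$.

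Next, I would handle the $H$-norm. Writing out the definition and applying Minkowski's integral inequality to the finite sum over $i$,
\begin{equation*}
  \|r^n_{\cdot,(K)}\|_H = \left\| \sum_{i=-M}^M |r^n_{i,(K)}(\cdot)|\Dx \right\|_{L^2(\Omega)} \leq \sum_{i=-M}^M \|r^n_{i,(K)}(\cdot)\|_{L^2(\Omega)}\Dx.
\end{equation*}
Substituting the uniform pointwise bound and using $(2M+1)\Dx = b-a$ yields
\begin{equation*}
  \|r^n_{\cdot,(K)}\|_H \leq (b-a)\frac{C_\rho C_\ell(n)(2\tau)^n\tau}{K^\ell}.
\end{equation*}
Finally, the bound $C_\ell(n) \leq 2^{(\ell+1)n}$ from (\ref{Cnl}) combines with $(2\tau)^n$ to give $C_\ell(n)(2\tau)^n \leq 2^{(\ell+1)n}\cdot 2^n\tau^n = (2^{\ell+2}\tau)^n$, producing the stated inequality.

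The main obstacle is really the mixed-norm bookkeeping: the definition of $\|\cdot\|_H$ takes the spatial $\ell^1$ norm inside the random $L^2$ integral, which is the wrong order for applying the pointwise-in-$i$ spectral estimate, so Minkowski must be invoked to swap them. Everything else (the spectral estimate, the regularity bound, the arithmetic combining $2^{(\ell+1)n}$ with $(2\tau)^n$) is routine once that step is in place. One small assumption worth flagging is that the cited spectral convergence constant $C_\rho$ is indeed independent of $i$ and $n$, which is standard for classical $\rho$ (e.g.\ Gaussian, Legendre-type) under Assumption \ref{assu}.
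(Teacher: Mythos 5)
Your proposal is correct and follows essentially the same route as the paper's proof: the pointwise-in-$i$ spectral projection estimate in $L^2_\rho$, the regularity bound of Theorem~\ref{regularity} with $\int\rho\,\diff z = 1$, Minkowski's inequality to move the spatial $\ell^1$ sum outside the random $L^2$ integral, and the count $(2M+1)\Dx = b-a$ together with $C_\ell(n)(2\tau)^n \leq (2^{\ell+2}\tau)^n$. The only difference is cosmetic ordering (you apply the pointwise estimate before Minkowski; the paper does the reverse), so no further comment is needed.
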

\begin{proof}
  By the definition of $r^n_{i,(K)}(z)$ and the norm $\|\cdot\|_H$,
  \begin{equation}
    \begin{split}
      \|r^n_{\cdot,(K)}(\cdot)\| 
      & = \|U^n_\cdot(\cdot) - \p_K U^n_\cdot(\cdot)\|_H 
      \\
      & = \Bigg(\int\|U^n_\cdot(z) - \p_K U^n_\cdot(z)\|^2_{l^1(D)}\rho(z)\diff z\Bigg)^{1/2}
      \\
      & = \Bigg(\int\Big(\sum\limits_{i=-M}^M |U^n_i(z) - \p_K U^n_i(z)|\Dx\Big)^2\rho(z)\diff z\Bigg)^{1/2}
      \\
      & \leq \sum\limits_{i=-M}^M \Bigg(\int |U^n_i(z) - \p_K U^n_i(z)|^2\rho(z)\diff z\Bigg)^{1/2}\Dx
      \\
      & = \sum_{i=-M}^M \|U^n_i(\cdot) - \p_K U^n_i(\cdot)\|_{L^2(\Omega)}\Dx,
    \end{split}
  \end{equation}
  here we have used the Minkowski inequality. Then by the standard error estimate for orthogonal polynomial approximations~\cite{Quarteroni:1982bmba}, we get
  \begin{equation}
    \|U^n_i(\cdot) - \p_K U^n_i(\cdot)\|_{L^2(\Omega)}\leq \frac{C_\rho\|\dnz{\ell}\Uniz\|_{L^2(\Omega)}}{K^\ell}.
  \end{equation}
  By using {\bf Theorem~\ref{regularity}}, one obtains
  \begin{equation}
    \|\dnz{\ell}U^n_i(\cdot)\|_{L^2(\Omega)} \leq \underset{i\in\N, z\in\Omega}{\max}\big|\dnz{\ell}\Uniz\big| \Bigg(\int \rho(z)\diff z\Bigg)^2 \leq C_l(n)(2\tau)^n\tau \leq 2^{(\ell+1)n}(2\tau)^n\tau,
  \end{equation}
  for $\forall l\in\N$, then 
  \begin{equation}\label{err2}
    \|U^n_i(\cdot) - \p_K U^n_i(\cdot)\|_{L^2(\Omega)}\leq C_\rho (2^{\ell+2}\tau)^n\tau/K^\ell, \quad\forall\ell\in\N,
  \end{equation}
  which leads to
  \begin{equation}
      \|U^n_\cdot(\cdot) - \p_K U^n_\cdot(\cdot)\|_H \leq \sum_{i=-M}^M C_\rho (2^{\ell+2}\tau)^n\tau/K^\ell\Dx = \frac{(b-a)C_\rho (2^{\ell+2}\tau)^n\tau}{K^\ell}.
  \end{equation}
  This completes the proof.
\end{proof}

It remains to estimate $e^n_{i,(K)}(z)$. To this aim,  first notice that $\UniK(z)$ satisfies
\begin{equation}\label{uk}
  \left\{
  \begin{aligned}
    U^{n+1}_{i,(K)}(z) &= \UniK(z) - \p_K \big[\lambda^-(z)\big(\UniK(z) - U^n_{i-1,(K)}(z)\big)\big] &\quad&\text{if $i\leq 0$}, \\
    U^{n+1}_{i,(K)}(z) &= \UniK(z) - \p_K \big[\big(\lambda^+(z)\UniK(z) - \lambda^-(z)U^n_{i-1,(K)}(z)\big)\big] && \text{if $i = 1$}, \\
    U^{n+1}_{i,(K)}(z) &= \UniK(z) - \p_K \big[\lambda^+(z)\big(\UniK(z) - U^n_{i-1,(K)}(z)\big)\big] && \text{if $i\geq 2$}.
  \end{aligned}
  \right.
\end{equation}
On the other hand, by doing the $K$th order projection directly on the scheme~(\ref{dis_scheme}), one obtains
\begin{equation}\label{pu}
  \left\{
  \begin{aligned}
    \p_K\Unpiz &= \p_K\Uniz - \p_K\big[\lambda^-(z)\big(\Uniz - \Unimz)\big] &\quad& \mbox{if } i\leq 0, \\
    \p_K\Unpiz &= \p_K\Uniz - \p_K\big[\big(\lambda^+(z)\Uniz - \lambda^-(z)\Unimz)\big] && \text{if $i = 1$}, \\
    \p_K\Unpiz &= \p_K\Uniz - \p_K\big[\lambda^+(z)\big(\Uniz - \Unimz)\big] && \text{if $i\geq 2$}. 
  \end{aligned}
  \right.
\end{equation}
(\ref{pu}) subtracted by~(\ref{uk}) gives
\begin{equation}\label{ek}
  \left\{
  \begin{aligned}
    e^{n+1}_{i,(K)} = e^n_{i,(K)} & - \p_K \big[\lambda^-(z)(e^n_{i,(K)} - e^n_{i-1,(K)})\big] \\
                    & - \p_K \big[\lambda^-(z)(r^n_{i,(K)} - r^n_{i-1,(K)})\big] &\quad&\text{if $i\leq 0$}, \\
    e^{n+1}_{i,(K)} = e^n_{i,(K)} &- \p_K \big[(\lambda^+(z)e^n_{i,(K)} - \lambda^-(z)e^n_{i-1,(K)})\big] \\
                    & - \p_K \big[(\lambda^+(z)r^n_{i,(K)} - \lambda^-(z)r^n_{i-1,(K)})\big]&& \text{if $i = 1$}, \\
    e^{n+1}_{i,(K)} = e^n_{i,(K)} &- \p_K \big[\lambda^+(z)(e^n_{i,(K)} - e^n_{i-1,(K)})\big] \\
                    & - \p_K \big[\lambda^+(z)(r^n_{i,(K)} - r^n_{i-1,(K)})\big] && \text{if $i\geq 2$}.
  \end{aligned}
  \right.
\end{equation}
where  the variable $z$ is omitted for clarity.

Now we can give the following estimate of the projection error $e^n_{i,(K)}(z)$,
\begin{lemma}[{\bf Projection error}]\label{pr_e}
  Under Assumption (\ref{assu}),  for a given final time $T = n\Dt$ and any given integer $\ell\in\N$ that the projection error satisfies the following estimate,
  \begin{equation}
    \|e^n_{\cdot,(K)}(\cdot)\|_H \leq \frac{2\tau(b-a)C_\rho C'_\ell(n)}{K^\ell}, \quad \forall\ell\in\N,
  \end{equation}
  where $C'_{\ell}(n)= \dfrac{(2^{\ell+2}\tau)^n - 3^n}{2^{\ell+2}\tau - 3}$ and $C_\rho$ is a constant determined only by the orthogonal polynomials $\{P_k(z)\}_{k\in\N}$ .
\end{lemma}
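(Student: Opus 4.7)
The plan is to derive a Gronwall-type discrete recursion for $\|e^n_{\cdot,(K)}\|_H$ by working pointwise in $i$ in the $L^2(\Omega)$ norm, summing in $\Delta x$ at the end via Minkowski. Starting from the error equation (\ref{ek}), I take the $L^2(\Omega)$ norm of both sides and use the triangle inequality to isolate the contribution of the $e$-terms and the $r$-terms. Two workhorse facts enter here: (i) the Galerkin projection $\p_K$ is an $L^2(\Omega)$-contraction, so $\|\p_K f\|_{L^2(\Omega)}\le\|f\|_{L^2(\Omega)}$; and (ii) Assumption~(\ref{assu}) gives the pointwise bound $0\le\lambda^{\pm}(z)\le 1$, so multiplication by $\lambda^{\pm}(z)$ is a contraction on $L^2(\Omega)$ as well. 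Applied to all three cases in (\ref{ek}) (the middle case $i=1$ is handled by splitting the two $\lambda$-terms and estimating them individually), this yields the uniform one-step bound
\begin{equation*}
  \|e^{n+1}_{i,(K)}\|_{L^2(\Omega)}
  \le 2\|e^n_{i,(K)}\|_{L^2(\Omega)}+\|e^n_{i-1,(K)}\|_{L^2(\Omega)}
  +\|r^n_{i,(K)}\|_{L^2(\Omega)}+\|r^n_{i-1,(K)}\|_{L^2(\Omega)}.
\end{equation*}

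Next I pass from this pointwise-in-$i$ estimate to the $H$-norm. By Minkowski's integral inequality applied to the $L^2(\Omega)$ norm of a finite sum,
\begin{equation*}
  \|e^n_{\cdot,(K)}(\cdot)\|_H
  \le \sum_{i=-M}^{M}\|e^n_{i,(K)}(\cdot)\|_{L^2(\Omega)}\,\Dx =: G^n.
\end{equation*}
Summing the pointwise inequality in $i$ with the weight $\Dx$ and shifting the index in the $e^n_{i-1,(K)}$ term produces
\begin{equation*}
  G^{n+1}\le 3G^n+2\sum_{i=-M}^{M}\|r^n_{i,(K)}(\cdot)\|_{L^2(\Omega)}\,\Dx
  \le 3G^n+\frac{2(b-a)C_\rho(2^{\ell+2}\tau)^n\tau}{K^\ell},
\end{equation*}
where in the last step I insert estimate (\ref{err2}) from the proof of Lemma~\ref{tr_e}. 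Since $U^0_{i,(K)}=\p_K U^0_i$ by construction, $e^0_{i,(K)}\equiv 0$ and hence $G^0=0$.

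Finally, I solve this scalar linear recursion explicitly. Iterating gives
\begin{equation*}
  G^n\le\frac{2\tau(b-a)C_\rho}{K^\ell}\sum_{j=0}^{n-1}3^{\,n-1-j}(2^{\ell+2}\tau)^j
  =\frac{2\tau(b-a)C_\rho}{K^\ell}\cdot\frac{(2^{\ell+2}\tau)^n-3^n}{2^{\ell+2}\tau-3},
\end{equation*}
and the right-hand side is exactly $\frac{2\tau(b-a)C_\rho C'_\ell(n)}{K^\ell}$, proving the claim.

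The only genuinely delicate point is keeping the two contraction properties (the $L^2(\Omega)$-contractivity of $\p_K$ and the pointwise bound $\lambda^{\pm}\le 1$) aligned so that the nonsmooth coefficient $\lambda^{\pm}(z)$ can be pulled out of the projection without inflating the constant; once that is in place, the rest is a standard Duhamel-style sum and a geometric-series identification. A minor bookkeeping item is the $i=1$ interface line of (\ref{ek}), which must be handled separately but yields the same uniform factor $3$ in the recursion for $G^n$.
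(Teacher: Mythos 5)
Your proof is correct and follows essentially the same route as the paper: take $L^2(\Omega)$ norms of the error equation (\ref{ek}), use $\|\p_K\|\le 1$ and $0\le\lambda^{\pm}(z)\le 1$ to get the uniform one-step bound with factor $2+1=3$, sum in $i$ with weight $\Dx$, insert (\ref{err2}), and solve the resulting linear recursion with zero initial error to obtain the geometric sum $C'_\ell(n)$. Your explicit introduction of $G^n=\sum_i\|e^n_{i,(K)}\|_{L^2(\Omega)}\Dx$ as the quantity satisfying the recursion (with $\|e^n_{\cdot,(K)}\|_H\le G^n$ by Minkowski) is in fact a slightly cleaner bookkeeping of the step the paper states as ``summing over $i$ and multiplying by $\Dx$.''
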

\begin{proof}
  First, according to~(\ref{ek}), one has the following estimate for $i\leq 0$,
  \begin{equation}
    \begin{split}
      \|e^{n+1}_{i,(K)}\|_{L^2(\Omega)} & \leq \|e^n_{i,(K)}\|_{L^2(\Omega)} + \|\p_K\| \big[\max\limits_{z\in\Omega}(\lambda^-(z))(\|e^n_{i,(K)}\|_{L^2(\Omega)} + \|e^n_{i-1,(K)}\|_{L^2(\Omega)})\big] \\
      &\quad + \|\p_K\| \big[\max\limits_{z\in\Omega}(\lambda^-(z))(\|r^n_{i,(K)}\|_{L^2(\Omega)} + \|r^n_{i-1,(K)}\|_{L^2(\Omega)})\big].
    \end{split}
  \end{equation}
Note   $\|\p_K\|\leq 1$ since it is a projection operator and $\max\limits_{z\in\Omega}(\lambda^{\pm}(z))\leq 1$, so one gets
  \begin{equation}
    \begin{split}
      \|e^{n+1}_{i,(K)}\|_{L^2(\Omega)} & \leq \|e^n_{i,(K)}\|_{L^2(\Omega)} + \|e^n_{i,(K)}\|_{L^2(\Omega)} + \|e^n_{i-1,(K)}\|_{L^2(\Omega)} \\
      &\quad + \|r^n_{i,(K)}\|_{L^2(\Omega)} + \|r^n_{i-1,(K)}\|_{L^2(\Omega)}.
    \end{split}
  \end{equation}
  According to (\ref{err2}), 
  \begin{equation}
    \|r^n_{i,(K)}\|_{L^2(\Omega)}\leq C_\rho (2^{\ell+2}\tau)^n\tau/K^\ell, \quad\forall i\in\Z, \forall \ell\in\N,
  \end{equation}
  so
  \begin{equation}
    \|e^{n+1}_{i,(K)}\|_{L^2(\Omega)} \leq 2\|e^n_{i,(K)}\|_{L^2(\Omega)} + \|e^n_{i-1,(K)}\|_{L^2(\Omega)} + C_\rho 2\tau(2^{\ell+2}\tau)^n/K^\ell.
  \end{equation}

Similarly, for $i = 1$ and $i\geq 2$, one has the same estimate as above.
Summing over $i$ and multiplying by $\Dx$ give
  \begin{equation}
    \|e^{n+1}_{(K)}\|_H \leq  3\|e^n_{(K)}\|_H + 2\tau(b-a)C_\rho (2^{\ell+2}\tau)^n /K^\ell.
  \end{equation}
  Using this recursive relation and notice that $\|e^0_{(K)}\|_H=0$, one obtains
  \begin{equation}
    \|e^n_{(K)}\|_H \leq \dfrac{2\tau(b-a)C_\rho}{K^\ell}\dfrac{(2^{\ell+2}\tau)^n - 3^n}{2^{\ell+2}\tau - 3}:=\frac{2\tau(b-a)C_\rho C'_\ell(n)}{K^\ell}.
  \end{equation}
  This completes the proof of the lemma.
\end{proof}

We are now ready to state the convergence theorem of gPC-SG method for the discrete scheme:
\begin{theo}\label{gpc_conv}
  Under  Assumption (\ref{assu}),  for a given final time $T = n\Dt$ and any given integer $\ell\in\N$, the error of the gPC-SG method for the discrete scheme is
  \begin{equation}\label{err}
    \|U^n - U^n_{(K)}\|_H \leq \frac{(b-a)C_\rho C(\ell,n)}{K^\ell}, \quad\forall\ell\in\N,
  \end{equation}
  where $C(\ell, n) = (2^{\ell+2}\tau)^n\tau + 2 \tau C'_\ell(n)$.
\end{theo}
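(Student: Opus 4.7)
The plan is to recognize that Theorem~\ref{gpc_conv} is a direct corollary of the two preceding lemmas, obtained by combining them via the error splitting already set up in~(\ref{split}). The bulk of the technical work has already been invested in Theorem~\ref{regularity} (the $z$-regularity of the discrete solution) and in Lemmas~\ref{tr_e} and~\ref{pr_e}, so here the only task is a clean triangle-inequality bookkeeping.

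Concretely, first I would invoke the decomposition
\begin{equation*}
  \Uniz - \UniK(z) = r^n_{i,(K)}(z) + e^n_{i,(K)}(z),
\end{equation*}
given in~(\ref{split}), where $r^n_{i,(K)}$ is the truncation/interpolation error and $e^n_{i,(K)}$ is the gPC projection error. Applying the triangle inequality in the norm $\|\cdot\|_H$ yields
\begin{equation*}
  \|U^n - U^n_{(K)}\|_H \;\leq\; \|r^n_{\cdot,(K)}(\cdot)\|_H + \|e^n_{\cdot,(K)}(\cdot)\|_H .
\end{equation*}

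Second, I would insert the two lemma bounds directly. From Lemma~\ref{tr_e}, for every $\ell\in\N$,
\begin{equation*}
  \|r^n_{\cdot,(K)}(\cdot)\|_H \;\leq\; \frac{(b-a)\,C_\rho\,(2^{\ell+2}\tau)^n\,\tau}{K^\ell},
\end{equation*}
and from Lemma~\ref{pr_e},
\begin{equation*}
  \|e^n_{\cdot,(K)}(\cdot)\|_H \;\leq\; \frac{2\tau(b-a)\,C_\rho\,C'_\ell(n)}{K^\ell}.
\end{equation*}
Adding these and factoring out $(b-a)C_\rho/K^\ell$ gives exactly
\begin{equation*}
  \|U^n - U^n_{(K)}\|_H \;\leq\; \frac{(b-a)\,C_\rho}{K^\ell}\bigl[(2^{\ell+2}\tau)^n\tau + 2\tau\,C'_\ell(n)\bigr] \;=\; \frac{(b-a)\,C_\rho\,C(\ell,n)}{K^\ell},
\end{equation*}
with $C(\ell,n)=(2^{\ell+2}\tau)^n\tau + 2\tau C'_\ell(n)$ as claimed in~(\ref{err}). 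Since $\ell$ is arbitrary and $C(\ell,n)$ is independent of $K$, this is spectral convergence in $K$ at fixed $\Dt$, $\Dx$.

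There is essentially no genuine obstacle in this final step; the real difficulty is already absorbed by Theorem~\ref{regularity}, which yields the pointwise regularity bound $|\partial_z^\ell \Uniz|\leq C_\ell(n)(2\tau)^n\tau$ and lets the standard orthogonal polynomial approximation estimate be applied uniformly in $i$. The only small care one must exercise is in handling the three cases ($i\le0$, $i=1$, $i\ge2$) uniformly when summing $\|e^{n+1}_{i,(K)}\|_{L^2(\Omega)}$ over $i$ — this is already done inside the proof of Lemma~\ref{pr_e} — and in noting that the contribution of the discontinuous interface has been fully absorbed into the smooth $z$-dependence of $\lambda^\pm(z)$, so that no boundary/interface term appears beyond what the immersed scheme~(\ref{dis_scheme}) already encodes.
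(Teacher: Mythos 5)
Your proposal is correct and matches the paper's proof exactly: the paper also combines the splitting~(\ref{split}) with the triangle inequality in $\|\cdot\|_H$ and inserts the bounds from Lemma~\ref{tr_e} and Lemma~\ref{pr_e} to obtain $C(\ell,n)=(2^{\ell+2}\tau)^n\tau + 2\tau C'_\ell(n)$. No differences worth noting.
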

\begin{proof}
  From {\bf Lemma~\ref{tr_e}} and {\bf Lemma~\ref{pr_e}}, one has
  \begin{equation*}
    \|U^n - U^n_{(K)}\|_H \le \|r^n_{(K)}\|_H +  \|e^n_{(K)}\|_H \leq \frac{(b-a)C_\rho (2^{\ell+2}\tau)^n\tau}{K^\ell} + \frac{2\tau(b-a)C_\rho C'_\ell(n)}{K^\ell} := \frac{(b-a)C_\rho C(\ell,n)}{K^\ell},
  \end{equation*}
  which completes the proof.
\end{proof}
\begin{remark}
  The constant $C(\ell, n) = O\big(2^{(\ell+1)n}\big) = O\bigg(2^{\frac{(\ell+1)T}{\Dt}}\bigg)$. This implies a spectral convergence in gPC order $K$ for every fixed $\Dt$.
\end{remark}

\subsubsection{An error estimate of the discrete gPC method}

Now we are ready to prove the main result of the error estimate. Part of this 
estimate uses the error estimate uses the result of Jin and Qi~\cite{Qi:2013byba} for the deterministic problem.

\begin{lemma}\label{lemma}
  Let $u_0(x,z)$ be a function of bounded variation for every fixed $z$. Then the immersed interface upwind difference scheme (\ref{dis_scheme}), under the CFL condition $0 < \lambda^{\pm}(z) < 1$, has the following $\ell^1$-error bound:
  \begin{equation}
    \|U^n(z) - u(\cdot,t^n,z)\|_{\ell^1(D)}\leq C_1(z)\Gamma(c^-(z)) + C_2(z)\Gamma(c^+(z)), \quad\text{for every fixed $z$},
  \end{equation}
  where
  \begin{equation}
  \Gamma(c^{\pm}(z)) = 2\sqrt{c^{\pm}(z)\Dx(1-c^{\pm}(z)\frac{\Dt}{\Dx})t_n} + \Dx,
  \end{equation}
  and $C_1(z)$, $C_2(z)$ are bounded functions with respect to $z$.
\end{lemma}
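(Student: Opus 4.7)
The plan is to observe that once $z$ is held fixed, Lemma 2.3 is a purely deterministic statement about the immersed upwind scheme of \cite{Qi:2013byba} applied to $u_t + (c(x) u)_x = 0$ with piecewise-constant wave speed $c(x) = c^\pm(z)$ and transmission condition (\ref{interface_1}). In that reference, precisely the $\ell^1$-error bound claimed here is established for the deterministic problem, so my strategy is to reduce to that theorem by freezing $z$ and then to track how the resulting constants depend on $z$.

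First I would decouple the domain $D$ along $x=0$. On each half-line the scheme (\ref{dis_scheme}) is the standard constant-coefficient upwind scheme --- with speed $c^-(z)$ on $\{i\leq 0\}$ and with speed $c^+(z)$ on $\{i\geq 2\}$ --- and only the single cell $i=1$ uses the mixed flux that encodes the transmission condition at the discrete level. On each half-line alone, the classical Kuznetsov BV framework applied to a constant-coefficient upwind scheme under the CFL condition $0 < \lambda^\pm(z) < 1$ delivers an $\ell^1$-error of the form $\mathrm{TV}(u_0) \cdot \bigl(2\sqrt{c\,\Dx(1 - c\,\Dt/\Dx)\,t_n} + \Dx\bigr)$, which is exactly the $\Gamma(c)$ bound.

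The main step is the bookkeeping at the interface. Because the flux at $i=1$ is built from $c^-$ on the inflow side and $c^+$ on the outflow side, the standard telescoping underlying the Kuznetsov estimate fails at that cell. Following \cite{Qi:2013byba}, I would bound this one-cell defect by showing that a characteristic of speed $c^-$ reaching the interface is converted in a single time step into a characteristic of speed $c^+$, with local truncation error of size $O(\Dx)$; summing over $n$ time steps contributes only an additional $O(\Dx)$ term that is absorbed into the $\Gamma(c^+(z))$ piece. This is the only delicate point --- the rest is the standard constant-coefficient analysis on each half-line.

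Adding the two half-line estimates then yields the claimed bound $C_1(z)\Gamma(c^-(z)) + C_2(z)\Gamma(c^+(z))$. The prefactors are of the form $C_i(z) = O\bigl(\mathrm{TV}(u_0(\cdot,z))\,(1+\alpha(z))\bigr)$ with $\alpha(z) = c^-(z)/c^+(z)$, so the BV hypothesis on $u_0(\cdot,z)$ together with the lower and upper bounds on $c^\pm(z)$ built into Assumption 2.1 guarantee that $C_1(z), C_2(z)$ are bounded in $z \in \Omega$. The principal obstacle in the argument is precisely the interface defect: without the immersed treatment this step would produce an $O(1)$ error at $x=0$, so the proof really rests on verifying that the mixed flux at $i=1$ reduces this to an $O(\Dx)$ contribution --- a computation carried out explicitly in \cite{Qi:2013byba} which we may simply invoke with $z$ treated as a parameter.
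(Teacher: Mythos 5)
Your proposal is correct and takes essentially the same route as the paper: the paper's proof simply freezes $z$, invokes Theorem~1 of \cite{Qi:2013byba} for the resulting deterministic interface problem, and notes that the strict positivity and boundedness of $c^{\pm}(z)$ yield bounded $C_1(z)$, $C_2(z)$. The additional detail you supply about the internals of the deterministic estimate (the Kuznetsov-type half-line bounds and the one-cell interface defect) is a faithful sketch of what \cite{Qi:2013byba} proves, but it is not reproduced in the paper, which treats that reference as a black box.
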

\begin{proof}
  For every fixed $z$, this is a deterministic problem thus one can use 
 Theorem~1 in~\cite{Qi:2013byba}. Note that we have assumed $c^{\pm}(z)$ are strictly positive and bounded function with respect to $z$, so one can get a bounded $C_1(z)$ and $C_2(z)$.
\end{proof}

Next we will prove the following error estimate:
\begin{theo}
  Under Assumption~\ref{assu} and assume $u_0(x, z)$ is a function of bounded variation for every $z$. Then the following error estimate of the  discrete gPC method holds:
  \begin{equation}
    \|U^n_{(K)} - u(\cdot, t^n, \cdot)\|_H \leq C(T)(\sqrt{\Dx + \Dt} + \Dx) + \frac{(b-a)C_\rho C(\ell,n)}{K^\ell}, \quad\forall l\in\N,
  \end{equation}
  where $C(T)$ depends only on time $T$ and $C(\ell,n)$ depends on $\Dt$ and $\ell$.
\end{theo}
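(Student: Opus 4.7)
\medskip
\noindent\textbf{Proof plan.} The plan is to combine the triangle inequality with the two main estimates already established. I would write
\begin{equation*}
\|U^n_{(K)} - u(\cdot, t^n, \cdot)\|_H \leq \|U^n - U^n_{(K)}\|_H + \|U^n(\cdot) - u(\cdot, t^n, \cdot)\|_H,
\end{equation*}
so that the spectral term $(b-a)C_\rho C(\ell,n)/K^\ell$ appearing in the statement is supplied at once by Theorem~\ref{gpc_conv}. All of the remaining work is then to control the second summand, the discretization error of the immersed upwind scheme, in the $H$-norm.

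For this second summand I would proceed pointwise in $z$ and only afterwards integrate. Lemma~\ref{lemma} yields, for each fixed $z \in \Omega$,
\begin{equation*}
\|U^n(z) - u(\cdot, t^n, z)\|_{\ell^1(D)} \leq C_1(z)\Gamma(c^-(z)) + C_2(z)\Gamma(c^+(z)).
\end{equation*}
Under Assumption~\ref{assu} the CFL numbers $\lambda^\pm(z) = c^\pm(z)\Dt/\Dx$ lie in $[0,1]$ and are $C^\ell$-smooth in $z$; together with the strict positivity of $c^\pm(z)$ this makes $c^\pm(z)$, and hence $C_1(z), C_2(z)$ from Lemma~\ref{lemma}, uniformly bounded on $\Omega$. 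Using the elementary estimate
\begin{equation*}
c^\pm(z)\,\Dx\,\bigl(1-\lambda^\pm(z)\bigr)\,t_n \leq C\,t_n\,\Dx \leq C\,T\,(\Dx+\Dt),
\end{equation*}
I obtain a uniform-in-$z$ pointwise bound
\begin{equation*}
C_1(z)\Gamma(c^-(z)) + C_2(z)\Gamma(c^+(z)) \leq \widetilde{C}(T)\bigl(\sqrt{\Dx+\Dt}+\Dx\bigr).
\end{equation*}
Squaring, integrating against $\rho(z)\diff z$, using $\int_\Omega \rho\,\diff z = 1$, and taking the square root then gives
\begin{equation*}
\|U^n(\cdot) - u(\cdot, t^n, \cdot)\|_H \leq C(T)\bigl(\sqrt{\Dx+\Dt}+\Dx\bigr),
\end{equation*}
and combining this with the bound from Theorem~\ref{gpc_conv} yields the desired inequality.

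The one delicate point that requires care is the uniformity in $z$ of the constants $C_1(z), C_2(z)$ coming out of Lemma~\ref{lemma}: one must revisit the deterministic analysis of Jin and Qi~\cite{Qi:2013byba} and verify that these constants depend only on the $L^\infty$-bound and strict positivity of $c^\pm(z)$ on $\Omega$ and on the total variation of $u_0(\cdot,z)$, all of which are uniform in $z$ by the hypothesis that $u_0(x,z)$ is BV for every $z$ together with Assumption~\ref{assu}. Once this uniform boundedness is in place, the remaining argument is a direct triangle inequality in $H$, pointwise majorization, integration against $\rho$, and a single appeal to Theorem~\ref{gpc_conv}.
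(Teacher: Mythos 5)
Your proposal is correct and follows essentially the same route as the paper: the same triangle-inequality decomposition, a single appeal to Theorem~\ref{gpc_conv} for the spectral term, and Lemma~\ref{lemma} combined with the boundedness of $C_1(z),C_2(z)$ and the bound $c^\pm(z)\Dx(1-\lambda^\pm(z))t_n\le C\,T(\Dx+\Dt)$ for the discretization term (the paper integrates $\Gamma(c^\pm(z))^2$ against $\rho$ after a Minkowski split rather than taking a uniform-in-$z$ pointwise bound first, but this is an immaterial variation). Your closing caveat about the $z$-uniformity of $C_1,C_2$ is already asserted in the statement of Lemma~\ref{lemma}, so no further work is needed there.
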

\begin{proof}
  First we split the error into two parts:
  \begin{equation}
    \|U^n_{(K)} - u(\cdot, t^n, \cdot)\|_H \leq \|U^n - u(x_i, t^n, z)\|_H + \|U^n_{(K)} - U^n\|_H.
  \end{equation}
  For the first part, it is the error of numerical scheme~(\ref{dis_scheme}), using {\bf Lemma~\ref{lemma}} one gets
  \begin{equation}
    \begin{split}
      \|U^n_{(K)} - u(\cdot, t^n, \cdot)\|_H &= \Bigg(\int\|U^{n}(z) - u(\cdot, t^n, z)\|_{l^1(D)}^2\rho(z)\diff z\Bigg)^{1/2} \\
      & \leq \Bigg(\int (C_1(z)\Gamma(c^-(z)) + C_2(z)\Gamma(c^+(z)))^2\rho(z)\diff z\Bigg)^{1/2} \\
      & \leq \Bigg(\int [(C_1(z)\Gamma(c^-(z))]^2\rho(z)\diff z\Bigg)^{1/2} + \Bigg(\int [(C_2(z)\Gamma(c^+(z))]^2\rho(z)\diff z\Bigg)^{1/2}.
    \end{split}
  \end{equation}
  The last inequality is obtained by Minkowski inequality. Notice that $C_1(z)$ is bounded and
  \begin{equation}
    \begin{split}
      \Bigg(\int [\Gamma(c^{\pm}(z))]^2\rho(z)\diff z\Bigg)^{1/2} & \leq 2\Bigg(\int c^{\pm}(z)\Dx(1-c^{\pm}(z)\frac{\Dt}{\Dx})t_n\rho(z)\,dz\Bigg)^{1/2} + \Bigg(\int \Dx^2\rho(z)\diff z\Bigg)^{1/2} \\
      &= 2\Bigg(t_n\Dx\int c^{\pm}(z)\rho(z)\diff z - t_n\Dt\int (c^{\pm}(z))^2\rho(z)\diff z\Bigg)^{1/2} + \Dx \\
      &\leq C(T)\sqrt{\Dx + \Dt} + \Dx.
    \end{split}
  \end{equation}
Therefore one gets
  \begin{equation}\label{err1}
    \|U^{n}- u(\cdot, t^n, \cdot)\|_H \leq C(T)(\sqrt{\Dx + \Dt} + \Dx).
  \end{equation}
  For the second part, according to {\bf Theorem~\ref{gpc_conv}} we have
  \begin{equation}
    \|U^n - U^n_{(K)}\|_H \leq \frac{(b-a)C_\rho C(\ell,n)}{K^\ell}, \quad\forall\ell\in\N.
  \end{equation}
  Then by adding these two parts we complete the proof.
\end{proof}

\section{A gPC method for the Liouville equation with discontinuous potential}
In this section we study the Liouville equation in classical mechanics with random uncertainties:
\begin{equation}\label{eq_Liouville}
  u_t + v  u_x - V_x  u_v = 0, \quad t>0, \quad x, v \in \R,
\end{equation}
with initial condition 
\begin{equation}
  u(x, v, 0, z) = u_0(x, v, z),
\end{equation}
where $u(x, v, t, z)$ is the density distribution of a classical particle at position $x$, time $t$ and traveling with velocity $v$. $V(x, z)$ is the potential depending on a random variable $z$

The Liouville equation has bicharacteristics defined  by Newton's second law:
\begin{equation}\label{H_sys}
    \frac{dx}{dt}=v, \quad \frac{dv}{dt} = -V_x(x, z),
\end{equation}
which is a Hamiltonian system with the random Hamiltonian
\begin{equation}
    H = \frac{1}{2} v^2 + V(x,z).
\end{equation}

If $V(x, z)$ is discontinuous with respect to $x$ which corresponds to a random potential barrier, then the characteristic speed of the Liouville equation given by (\ref{H_sys}) is infinity at the discontinuous point and a conventional numerical scheme becomes difficult. On the other hand, it is known from classical mechanics that the Hamiltonian remains constant across a potential barrier. Based on this principle, Jin and Wen proposed a framework, called Hamiltonian preserving scheme in which they build the interface condition into the scheme according to the behavior of a particle across the potential barrier~\cite{Wen:2005ueba}, \cite{Novak:2006cpba}.

As in the previous section, we first discretize equation (\ref{eq_Liouville}) using the Hamiltonian preserving scheme in which we regard the random variable $z$ as a fixed parameter.

Without loss of generality, we employ a uniform mesh with grid points at $x_{i+1/2}$, $i=0,\dotsc, N$ in the $x$-direction and $v_{j+1/2}$, $j = 0, \dotsc, M$ in the $v$-direction. The cells are centered at $(x_i,v_j)$, $i = 1, \dotsc, N$, $j = 1, \dotsc, M$ with $x_i = (x_{i+1/2} + x_{i-1/2})/2$ and $v_j = (v_{j+1/2} + v_{j-1/2})/2$. The mesh size is denoted by $\Delta x=x_{i+1/2}-x_{i-1/2}$ and $\Delta v=v_{i+1/2}-v_{i-1/2}$. Also we assume that the discontinuous points of potential $V$ are located at some grid points. Let the left and right limits of $V$ at point $x_{i+1/2}$ be $V^+_{i+1/2}$ and $V^-_{i+1/2}$ respectively. The scheme reads:
\begin{equation}\label{semi_dis}
  \partial_t u_{ij}(z) + v_j\frac{u^-_{i+1/2,j}(z)-u^+_{i-1/2,j}(z)}{\Dx}-\mathrm{D}V_i(z)\frac{u_{i,j+1/2}(z)-u_{i,j-1/2}(z)}{\Dv}=0,
\end{equation}
here 
\begin{equation}
  \mathrm{D}V_i(z) := \dfrac{V^-_{i+1/2}(z)-V^+_{i-1/2}(z)}{\Dx}.
\end{equation}
We also need to determine the numerical fluxes $u_{i, j+1/2}(z)$ and $u^{\pm}_{i+1/2, j}(z)$ at each cell interface. 

\subsection{A first order finite difference approximation}
Here we can use the standard first order upwind scheme for the fluxes $u^{\pm}_{i+1/2, j}(z)$ since the wave speed in this direction, which is $v_j$, has nothing to do with the random variable $z$. So the characteristic in fact is deterministic. For example we consider the case $v_j>0$, according to the Hamilton preserving scheme such fluxes read,
\begin{equation}\label{cck}
  \begin{aligned}
    &u^-_{i+1/2,j}(z) = u_{ij}(z), \\
    &u^+_{i+1/2,j}(z) = 
    \begin{cases}
      c_1 u_{i+1,k}(z) + c_2 u_{i+1,k+1}(z), &\text{when transmission}, \\
      u_{i+1, k}(z), &\text{when reflection},
    \end{cases}
  \end{aligned}
\end{equation}
Here $k$ is an index determined by the energy conservation across the interface (see~\cite{Wen:2005ueba}),
\begin{equation}\label{hpc}
  \frac{1}{2}(v_j)^2 + V^-_{i+1/2} = \frac{1}{2}(v^+)^2 + V^+_{i+1/2},
\end{equation}
where $v^+$ is the velocity across the barrier. If $(v_j)^2 + 2(V^-_{j+1/2} - V^+_{j+1/2})>0$, particle will transmit, thus
\begin{equation}
  v^+ = \sqrt{(v_j)^2 + 2(V^-_{j+1/2} - V^+_{j+1/2})},
\end{equation}
$k$ is the index such that
\begin{equation}
  v_k\leq v^+ < v_{k+1},
\end{equation}
and $c_1$ and $c_2$ are the coefficients of a linear interpolation,
\begin{equation}
  c_1 = \frac{v^+ - v_k}{\Dv}, \quad c_2 = \frac{v_{k+1} - v^+}{\Dv}, \quad c_1 + c_2 = 1.
\end{equation}
If $(v_j)^2 + 2(V^-_{j+1/2} - V^+_{j+1/2})<0$, then particle will reflect,
$k$  is the index such that
\begin{equation}\label{-j}
  v_k = -v_j.
\end{equation}
When $v_j<0$, we can determine $c_1$, $c_2$ and $k$ similarly using the Hamilton preserving condition~(\ref{hpc}).

For the flux $u_{i, j+1/2}(z)$, one should be careful when dealing with it. Unlike the flux in $x$-direction, the wave speed in $v$-direction, $\mathrm{D} V_i(z)$, depends on the random variable $z$ such that the characteristic is random. This will make the discontinuity of solution in the physical space propagate into the random space if we use a characteristic dependent scheme (\ie upwind scheme), which results a bad regularity of the solution with respect to $z$. 

Here we use the Lax-Friedrichs flux which is a characteristic independent scheme for the $v$-direction flux in general:
\begin{equation}\label{v-direction}
  u_{i,j+1/2}(z) = \frac{1}{2}\left[\frac{\alpha}{DV_i(z)}\big(u_{i,j+1}(z) - u_{ij}(z)\big) - \big(u_{ij}(z) + u_{i,j+1}(z)\big)\right],
\end{equation}
where $\alpha$ is a constant satisfying $\alpha\geq \max\limits_{i, z}|DV_i(z)|$.

From the discussion above, one can easily see that the fluxes of $x$-direction and $v$-direction are both smooth functions with respect to $z$. As in Section 2, we can conclude that the solution of this scheme, which is $u_{ij}(z)$, are smooth functions of $z$ for each $i, j$. Then we apply the standard gPC-SG method to this discrete system, same as in Section 2, one can expect a fast convergence of gPC expansion to the discretized solution when the mesh size $\Dx$ and time step $\Dt$ are fixed. The justification is the same as in Section 2.

\begin{remark}
Here any central schemes, such as the local Lax-Friedrichs scheme, can be used
besides the Lax-Friedrichs scheme.
\end{remark}
\subsection{A formally second order  spatial discretization}
In previous two sections, we have presented our discrete gPC scheme with a first order spatial discretization. In the following, we will give the formally second 
order spatial discretization. Specifically, the spatial numerical flux
used in the Hamilton Preserving scheme~\cite{Wen:2005ueba} is given by (consider the case when $v_j>0$)
\begin{equation}\label{xflux}
  \begin{aligned}
    &u^-_{i+1/2,j}(z) = u_{ij}(z) + \frac{\Delta x}{2}s_{ij}(z), \\
    &u^+_{i+1/2,j}(z) = 
    \begin{cases}
      c_1\left(u_{i,k}(z) + \dfrac{\Delta x}{2}s_{i,k}(z)\right) + c_2\left(u_{i,k+1}(z) + \dfrac{\Delta x}{2}s_{i,k+1}(z)\right),  \\
      u_{i+1, k}(z) - \dfrac{\Delta x}{2}s_{i+1, k}(z),
    \end{cases}
  \end{aligned}
\end{equation}
where $s_{ij}$ is the numerical slope, $c_1,c_2,k$ are determined by the Hamilton preserving scheme just as the first order case in previous subsection~(\ref{cck})--(\ref{-j}).

Since the solution contains discontinuities, a second order scheme will necessarily introduce numerical oscillations. In order to suppress these oscillations, one can use the limited slope, in the spirit of total-variation-diminishing
(TVD) framework~\cite{LeVeque}. Most of the slope limiteds used in the
shock capturing community are non-smooth functions, while in our approach
the regularity in $z$ is essential. To this aim, we use smooth slope
limiters called BAP, introduced in~\cite{Liu:1998faba}. For the backward and forward differences, 
\begin{equation}
\begin{aligned}
  s_l(z) &= (u_{ij}(z)-u_{i-1,j}(z))/\Delta x, \\
  s_r(z) &= (u_{i+1,j}(z)-u_{ij}(z))/\Delta x,
\end{aligned}
\end{equation}
at $(x_i,v_j)$, the BAP slope is given by
\begin{equation}
  s_{ij}(z)=\mathcal{B}^{-1}\left(\frac{\mathcal{B}(s_l(z))+\mathcal{B}(s_r(z))}{2}\right).
\end{equation}
Some examples of smooth $\mathcal{B}(x)$ include
\begin{equation}
  \begin{aligned}
    \mathcal{B}(x)&=\arctan(x), \quad&\mathcal{B}^{-1}(x)&=\tan(x), \\
    \mathcal{B}(x)&=\tanh(x), \quad& \mathcal{B}^{-1}(x)&=\tanh^{-1}(x), \\
    \mathcal{B}(x)&=\frac{x}{\sqrt{1+x^2}}, \quad& \mathcal{B}^{-1}(x)&=\frac{x}{\sqrt{1-x^2}}.
  \end{aligned}
\end{equation}

\subsection{The full discretization}

Next we need to define the numerical flux in the $v$-direction. To this stage, in order to get a smooth discrete solution (with respect to $z$), we also need to choose some scheme that does not depend on characteristic information.
Here we  use the Lax-Wendroff scheme. The $v$-direction flux:
\begin{equation} \label{vflux}
u_{i,j+1/2}(z) = \frac{1}{2}(u_{i,j+1}(z)+u_{i,j}(z))
  + (\mathrm{D}V_i(z))\frac{\Delta t}{2\Delta v}(u_{i,j+1}(z)-u_{ij}(z)).
\end{equation}

So combine~(\ref{semi_dis}), (\ref{xflux}) and (\ref{vflux}), we get a second order in space and velocity,  whose solution is smooth with respect to $z$,
 written as
\begin{equation}
  \dt u_{ij}(z) = \mathrm{RHS}(z).
\end{equation}

We now the gPC-Galerkin method to this discrete system, for the $k$-th component $u^{n,k}_{ij}$ in gPC expansion we have:
\begin{equation} \label{gPC2}
  \dt u^{k}_{ij}(z) = \left<\mathrm{RHS}(z),P_k(z)\right>,
\end{equation}
where $P_k(z)$ is the $k$-th order orthogonal polynomial and $\left<\cdot\right>$ is the inner product on the random space.
Due to the complicated nonlinear form of RHS($z$), we will use numerical integration, \ie, the Gauss-quadrature to calculate the right hand side of~(\ref{gPC2}).
\begin{equation}\label{RHS-col}
  \left<\mathrm{RHS}(z),P_k(z)\right> = \sum_{m=0}^{M}\mathrm{RHS}(z_m)P_k(z_m)w_m,
\end{equation}
where $M$ is the total number of quadrature points we choose and $z_i$, $w_i$ are the Gauss quadrature points and corresponding weights. 
Here we summarize the algorithm on every time step:
\begin{itemize}
  \item First, use the gPC expansion $u^n_{ij}(z) = \sum\limits_{k=0}^K u^{n,k}_{i,j}P_k(z)$ to compute $u^n_{ij}(z_m)$. Notice that one only needs to compute $P_k(z_m)$, which is independent of time thus can be pre-computed before time marching. 
  \item Using $u^n_{ij}(z_m)$ and (\ref{xflux}), (\ref{vflux}) to get RHS($z_m$) for every $i,j,m$.
  \item Finally by (\ref{RHS-col}) and time marching using the forward Euler or Runge-Kutta methods to get $u^{n+1,k}_{ij}$ for every $i,j,k$. This finishes one time step. 
\end{itemize}
 
\begin{remark}
  For the convection equation (\ref{eq:convection}), one can simply replace $U_i^n(z)$ by $U_i^n(z) + s_i(z)\Dx/2$ in (\ref{dis_scheme}) and follow the procedure above to get a second order scheme in spatial domain.
\end{remark}
\section{Numerical examples}

In this section we will conduct some numerical experiments to show the performance of the proposed methods and check their numerical accuracy.

\subsection{Example 1: the scalar convection equation with discontinuous coefficient}
We consider the initial problem
\begin{equation}
  \left\{
  \begin{aligned}
    &u_t + \big[(c(x,z)u\big]_x=0, &\quad& t>0, x\in\R, \\
    &u(x,0) = u_0(x), &&x\in\R,
  \end{aligned}
  \right.
\end{equation}
with
\begin{equation}
    c(x,z) = 0.3z + 
    \begin{cases}
        c^- > 0, \quad &\text{if $x < 0$},\\
        c^+ > 0, \quad &\text{if $x > 0$},
    \end{cases}
\end{equation}
where $z$ is uniformly distributed on $[-1,1]$ (thus the gPC basis should be the normalized Legendre polynomials) and we treat the random variable $z$ as a small perturbation such that $(c^{\pm} + 0.3z) > 0$ for any $z\in[-1,1]$. 

In this example, we set the initial data as
\begin{equation}
  u_0(x) = \cos (0.25\pi x), \quad \text{on $[-1,3]$},
\end{equation}
and an interface is located at $x=0$ with the condition:
\begin{equation}
  u(0^+, t, z) = \rho(z) u(0^-, t, z),
\end{equation}
where 
\begin{equation}
  \rho(z) = \frac{c^- + 0.3z}{c^+ + 0.3z},
\end{equation}
for the conservation of flux.

The analytic solution of this simple model problem can be easily obtained by using the method of characteristic including the interface condition~\cite{Wen:2005ueba}:
\begin{equation}\label{ana_sol}
  u(x, t, z)=   
  \begin{cases}
    u_0\big(x - (c^+ + 0.3z)t\big), \quad &x > (c^+ + 0.3z)t,\\
    \rho(z) u_0\big(\rho(z)[x - (c^+ + 0.3z)t]\big), &0 < x < (c^+ + 0.3z)t,\\
    u_0\big(x - (c^- + 0.3z)t\big),  &x < 0.
  \end{cases}
\end{equation}
In the following examples, we set $c^- = 1$, $c^+ = 2$, and final time $T = 1$. The expectation and variance of the analytic solution can be obtained using~(\ref{exp}) and~(\ref{var}).

For numerical solutions, we compute their expectation by 
\begin{equation*}
  \mathbb{E}_i(t^n):=\mathbb{E}[u(x_i,t^n,z)]=\int u(x_i,t^n,z)\rho(z)\diff z = \hat{U}^n_{i,(0)}, 
\end{equation*}
and their variance by
\begin{equation*}
  \mathbb{V}_i(t^n):=\mathbb{E}\big[(u-\mathbb{E}(u))^2\big] = \sum_{k=1}^K \big(\hat{U}^n_{i,(k)}\big)^2.
\end{equation*}
The norm for measuring the error between the analytic solution and the numerical solution is~$\ell^1$.

\begin{figure}[htbp]
  \centering
  \includegraphics[width=0.8\textwidth]{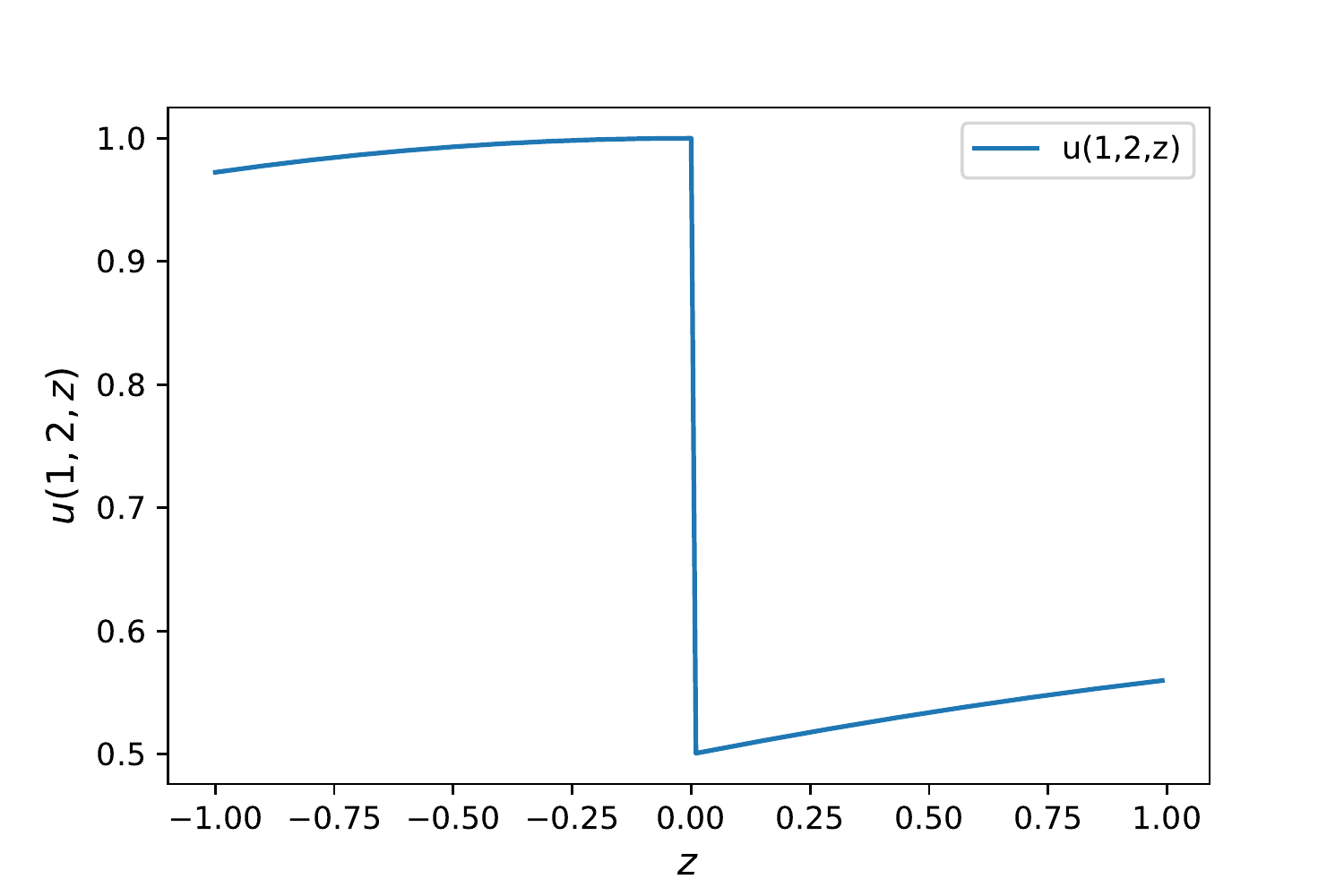}
  \caption{Example 1. The analytic solution~(\ref{ana_sol}) at $t=1$ and $x=2$ is a discontinuous function of $z$.}
  \label{1}
\end{figure}
Figure~\ref{1} shows that the analytic solution~(\ref{ana_sol}) has a discontinuity at $z = 0$ when $x = 2$.  Figure~\ref{2} shows that the expectation and variance of the analytic solution. In this case, one can expect a low  convergence rate of the standard gPC-SG method.  
\begin{figure}[htbp]
  \centering
  \includegraphics[width=0.8\textwidth]{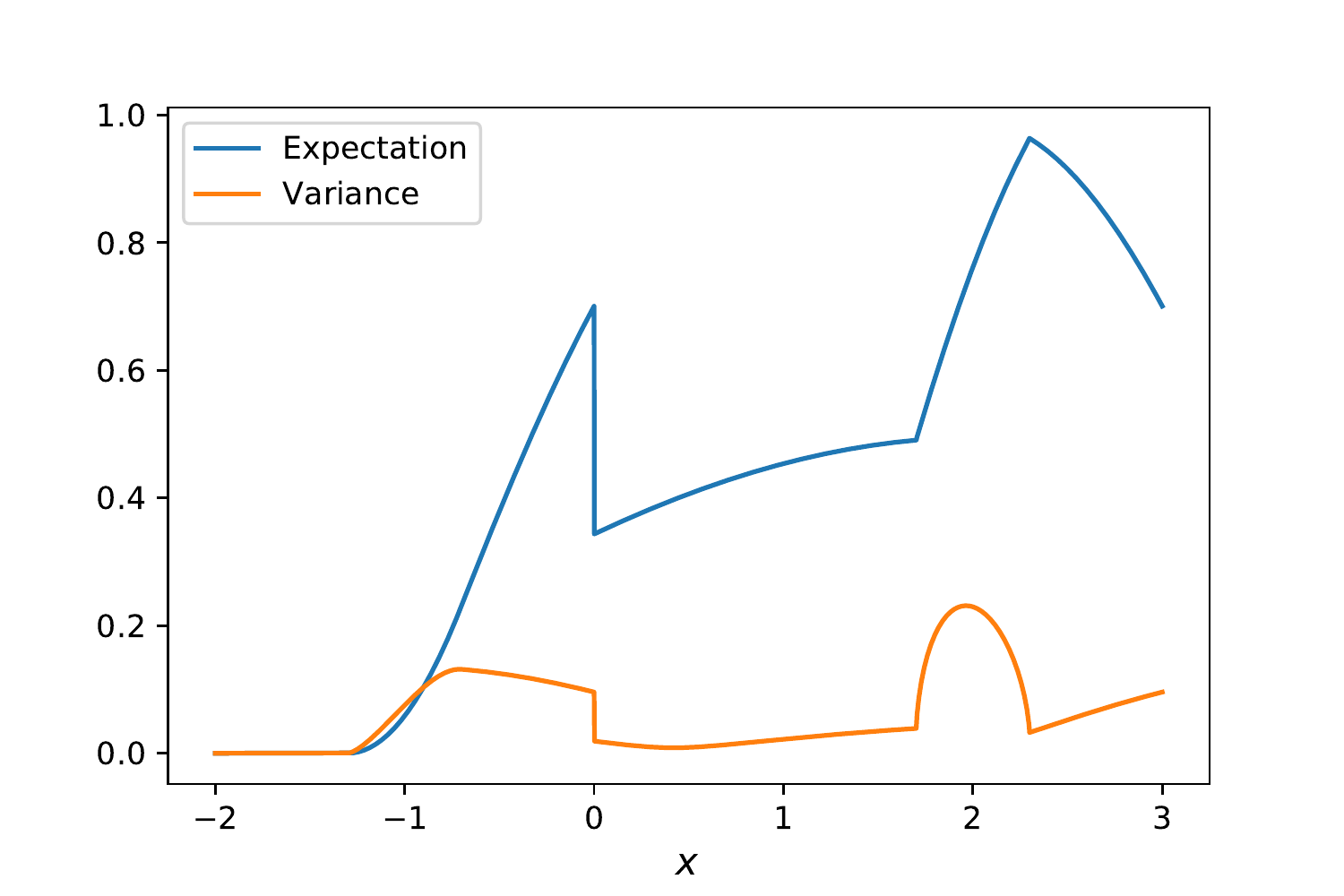}
  \caption{Example 1. The expectation and variance of the analytic solution.}
  \label{2}
\end{figure}

\subsubsection{The first order finite difference approximation}
In this subsection, we will give the numerical results of our discrete gPC-SG method.  Figure~\ref{3} shows the numerical expectation and variance compared with the analytic solution with $\Dx = 0.001$, $\Dt = \dfrac{1}{4}\Dx$ and gPC order $K=20$. The discrepancy on the variance is due to the poor resolution of the
first order spatial discretization, which is improved with the second order
spatial discretization to be used later.

Next we conduct the convergence test only for the gPC approximation.
We fix $\Dx=0.005$ and $\Dt = \dfrac{1}{5}\Dx$ in all computations with
different $K$. Figure~\ref{4} shows that the $\ell^1$ error decays very fast with respect to the gPC order $K$. When $K=4$, it decays to the numerical error of the finite difference method. 
\begin{figure}[htbp]
  \includegraphics[width=0.5\textwidth]{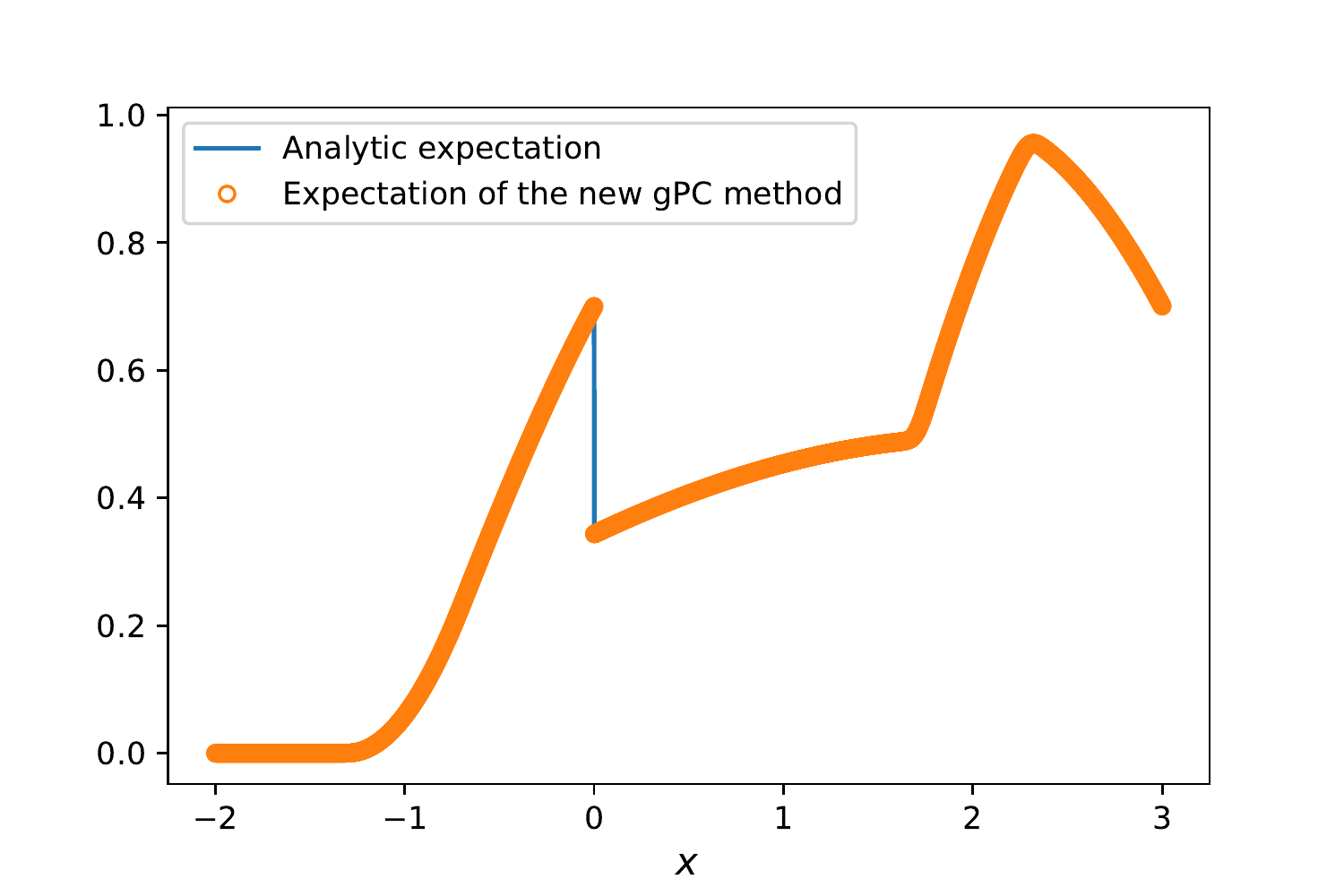}
  \includegraphics[width=0.5\textwidth]{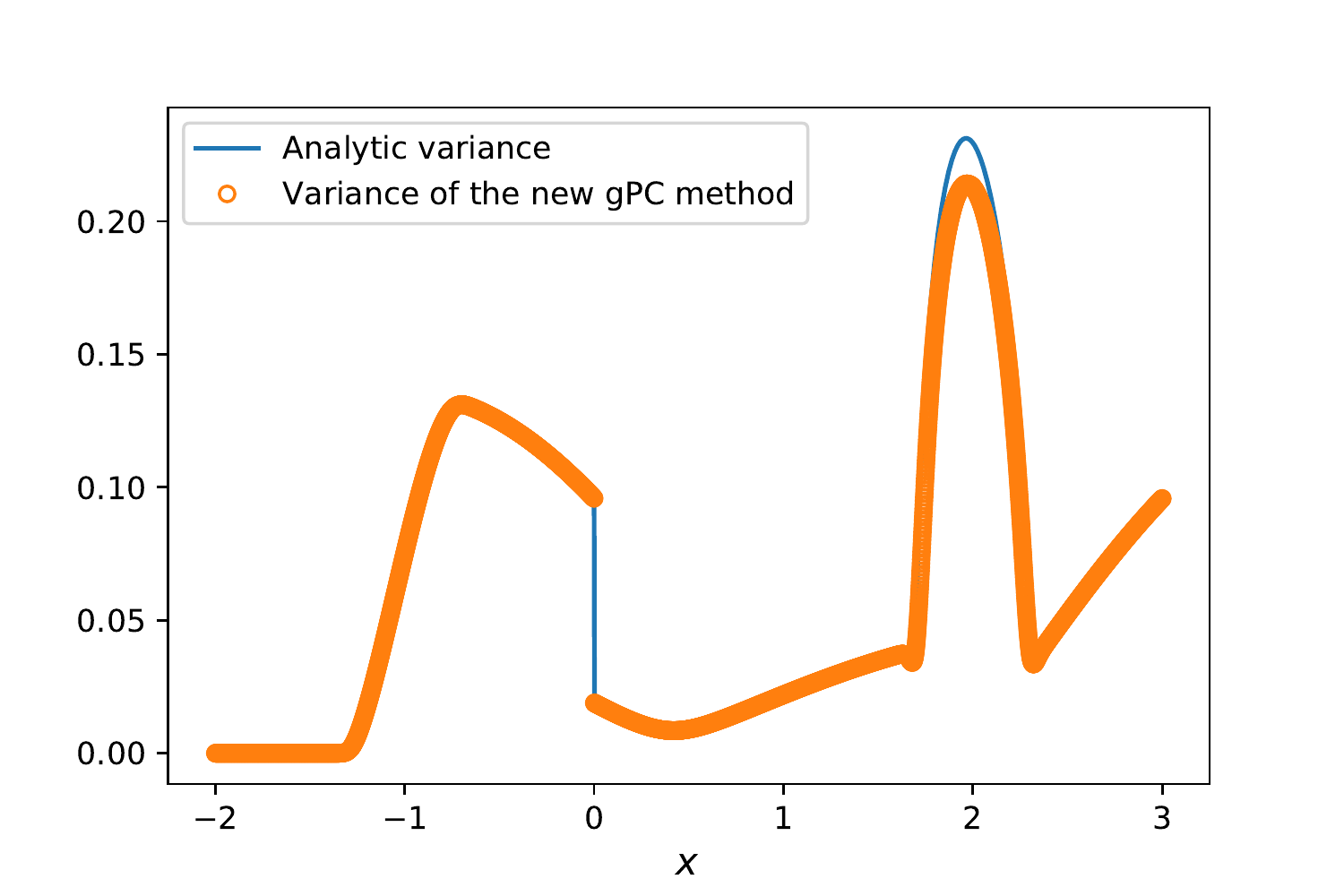}
  \caption{Example 1. The analytic solution compared with the new gPC-SG method using first order finite difference approximation with $\Dx = 0.001$, $\Dt = \dfrac{1}{4}\Delta x$, gPC order $K=20$.}
  \label{3}
\end{figure}

\begin{figure}[htbp]
  \centering
  \includegraphics[width=0.8\textwidth]{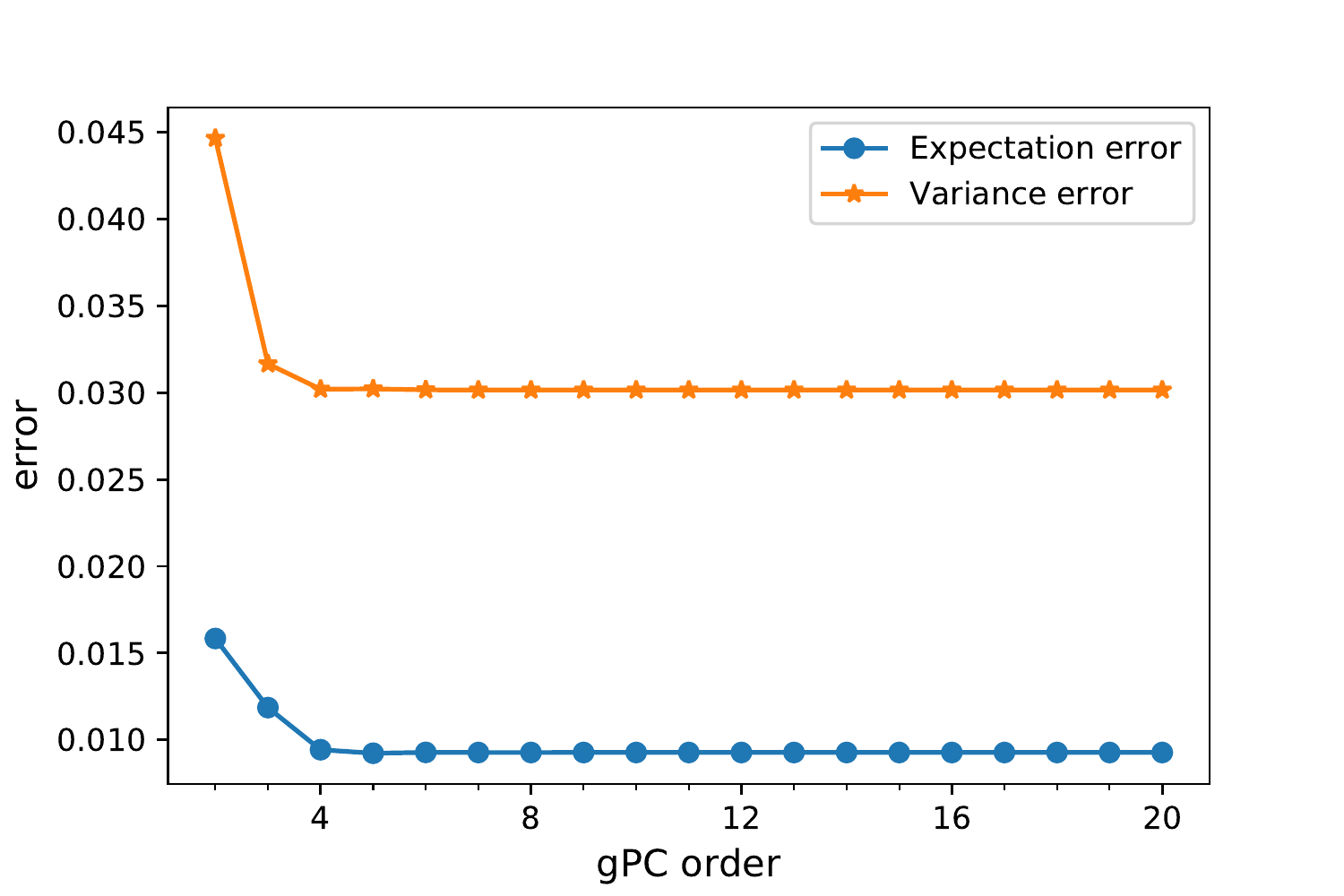}
  \caption{Example 1. The first order finite difference approximation with $\Dx=0.005$, $\Dt=\dfrac{1}{5}\Dx$: the $\ell^1$ error versus the gPC order.}
  \label{4}
\end{figure}

However, in Figure~\ref{4}, since the finite difference error dominates the
gPC error, it is difficult to verify the convergence rate of the gPC method. In order to examine the gPC error, we fix $\Dx$ and $\Dt$, and
compare the numerical solutions with different $K$, with the case of $K=30$ serving as the reference solution.  We measure the $\ell^1$ error between each $K=2,3,\dotsc,20$ and $K=30$. The result is shown 
in Figure~\ref{5}, in which an exponential convergence in the gPC approximation can be observed by using the $\log$-$\log$ plot. Note that if the convergence order is algebraic, the curve should be a line.
Here the curve shape shows the  exponential decay of the gPC error.

\begin{figure}[htbp]
  \centering
  \includegraphics[width=0.8\textwidth]{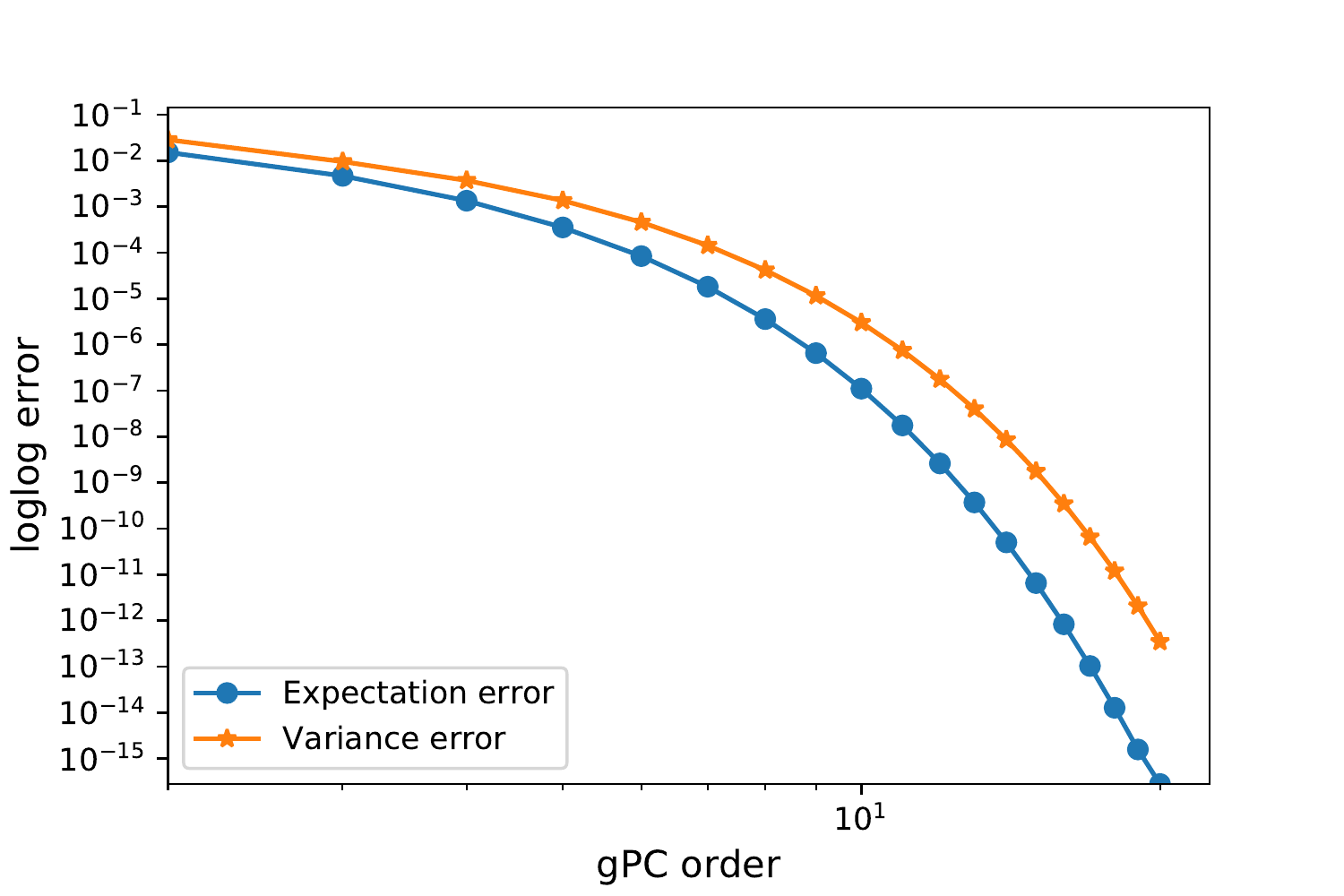}
  \caption{Example 1. The first order finite difference approximation $\Dx=0.005$, $\Dt=\dfrac{1}{5}\Dx$: the gPC error versus the gPC order by a log-log plot
(with other numerical parameters fixed).}
  \label{5}
\end{figure}

%

\subsubsection{The second order finite difference approximation}
For the second order scheme, we use the same set up as in the first order case. Figure~\ref{mv2} shows the expectation and variance compared with the analytic solution,
which gives a more accurate solution than the first
order approximation especially for the variance around $x=2$. 

Figure~\ref{mverr2} and Figure~\ref{mverrlog2} show the convergence of the numerical method in the gPC order from which one can observe the fast convergence. Comparing Figure 7 with Figure 4, we can see that the second scheme has a better total $\ell^1$ error. But the rate of the gPC convergence shown in Figure~\ref{mverrlog2} is  not as fast as the first order scheme. This is hardly surprising since our spectral convergence
depends on the smoothness of the discrete solutions, and the smoothness is
given by the numerical viscosity which is larger 
for the first order spatial discretization. The second order spatial
discretization offers better accuracy away from the discontinuities and better
resolutions at discontinuities, but  because it is closer
to the analytic solution (which is not smooth) thus less smooth than the first order one, and smoothness of the discrete solution is what 
 our spectral convergence relies upon, thus its gPC congerence rate, compared with
the first order one, should be slower.  However this does not mean that 
the second order method is inferior to the first one, since one has to 
consider the {\it overall} error, including the contributions of error from
the spatial discretization in this problem. By comparing  Figure 6 with Figure 3, and 
Figure 7 with Figure 4, it is obvious that the second order scheme outperforms
the first order one.
 
\begin{figure}[htbp]
  \includegraphics[width=0.5\textwidth]{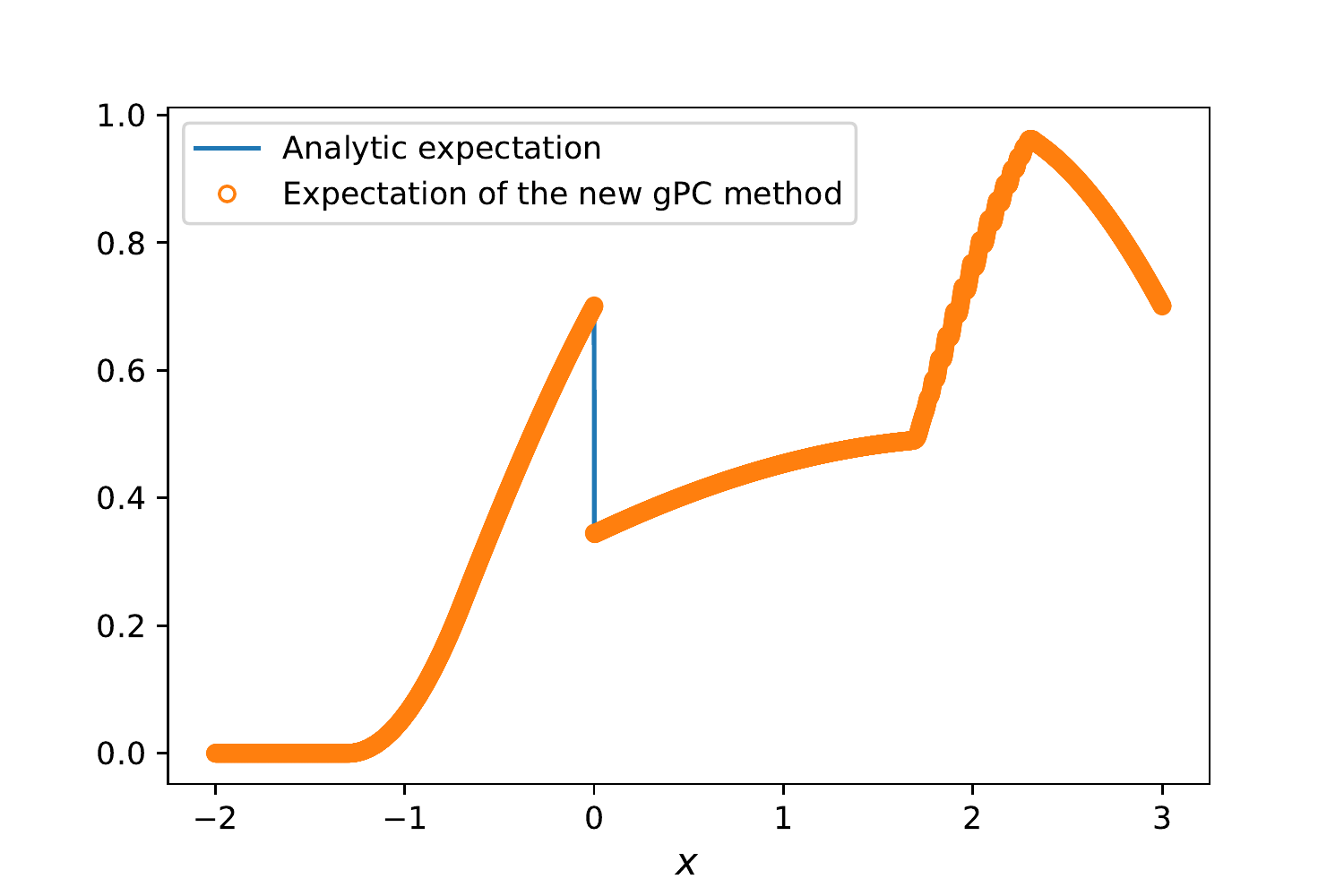}
  \includegraphics[width=0.5\textwidth]{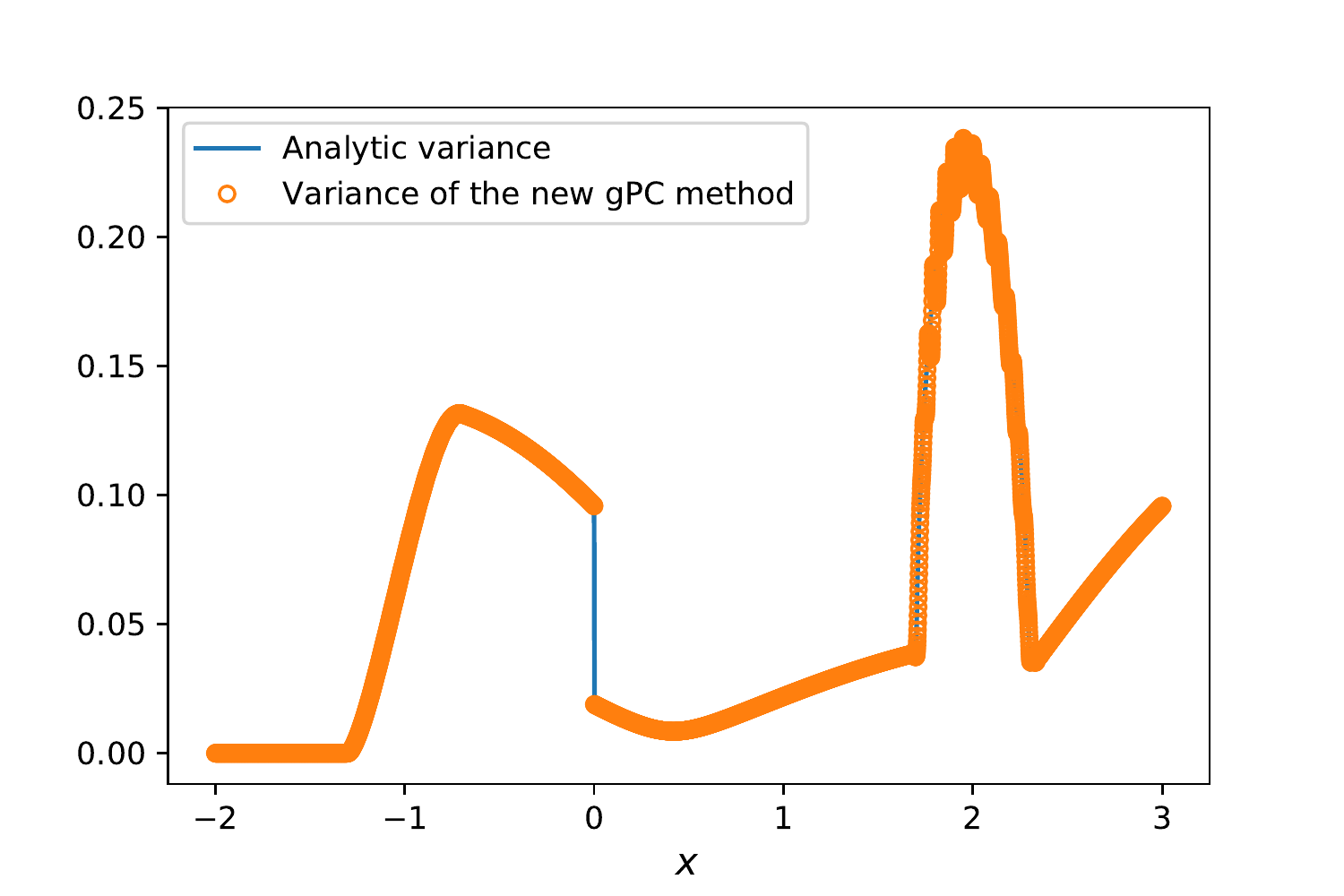}
  \caption{Example 1. The analytic solution compared with the new gPC-SG method using the second order finite difference approximation with  $\Delta x = 0.001$, $\Delta t = \dfrac{1}{4}\Delta x$, gPC order $K=20$.}
  \label{mv2}
\end{figure}

\begin{figure}[htbp]
  \centering
  \includegraphics[width=0.8\textwidth]{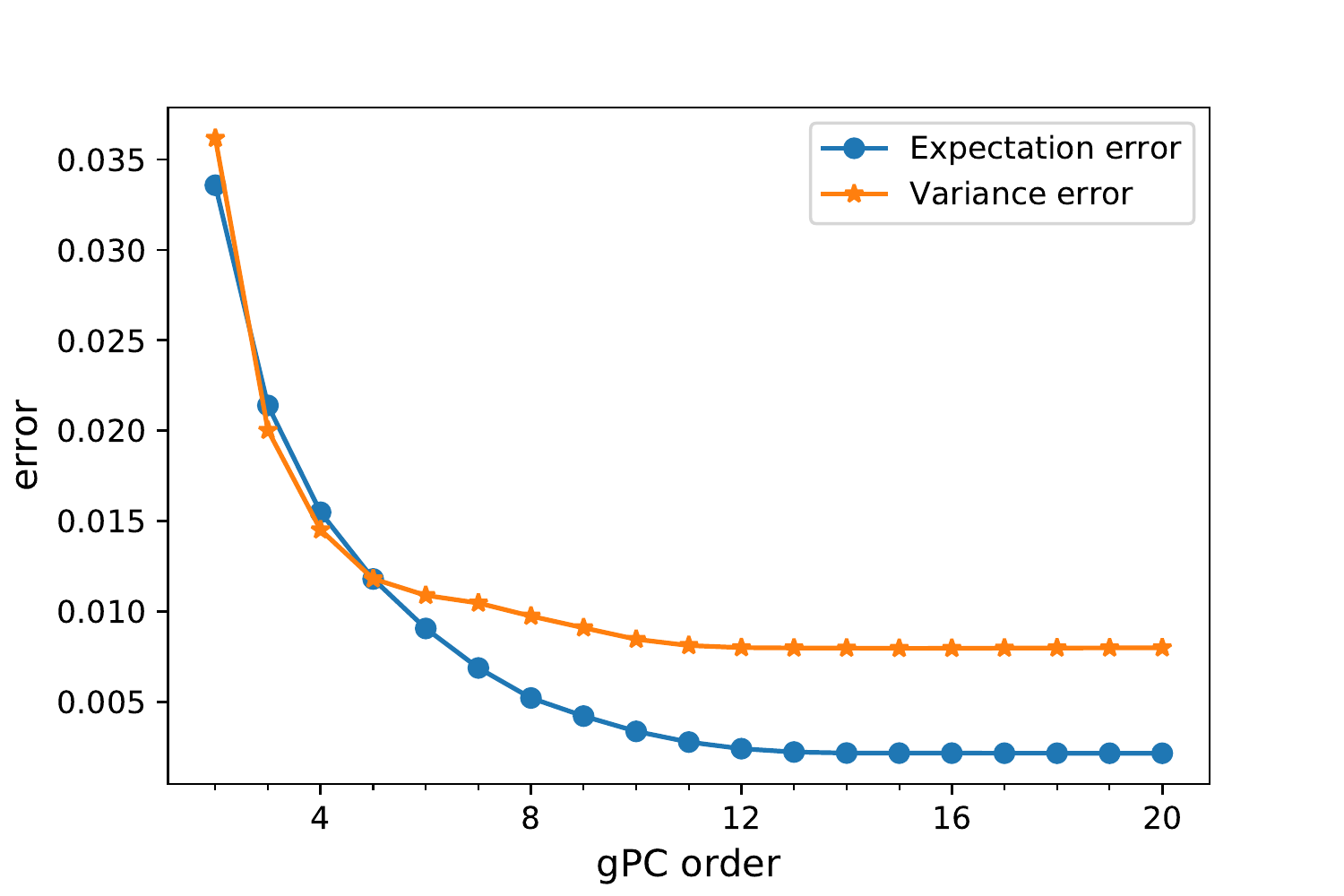}
  \caption{Example 1. The second order finite difference approximation $\Dx=0.005$, $\Dt=\dfrac{1}{5}\Dx$: the 
  $\ell^1$ error versus the gPC order.}
  \label{mverr2}
\end{figure}

\begin{figure}[htbp]
  \centering
  \includegraphics[width=0.8\textwidth]{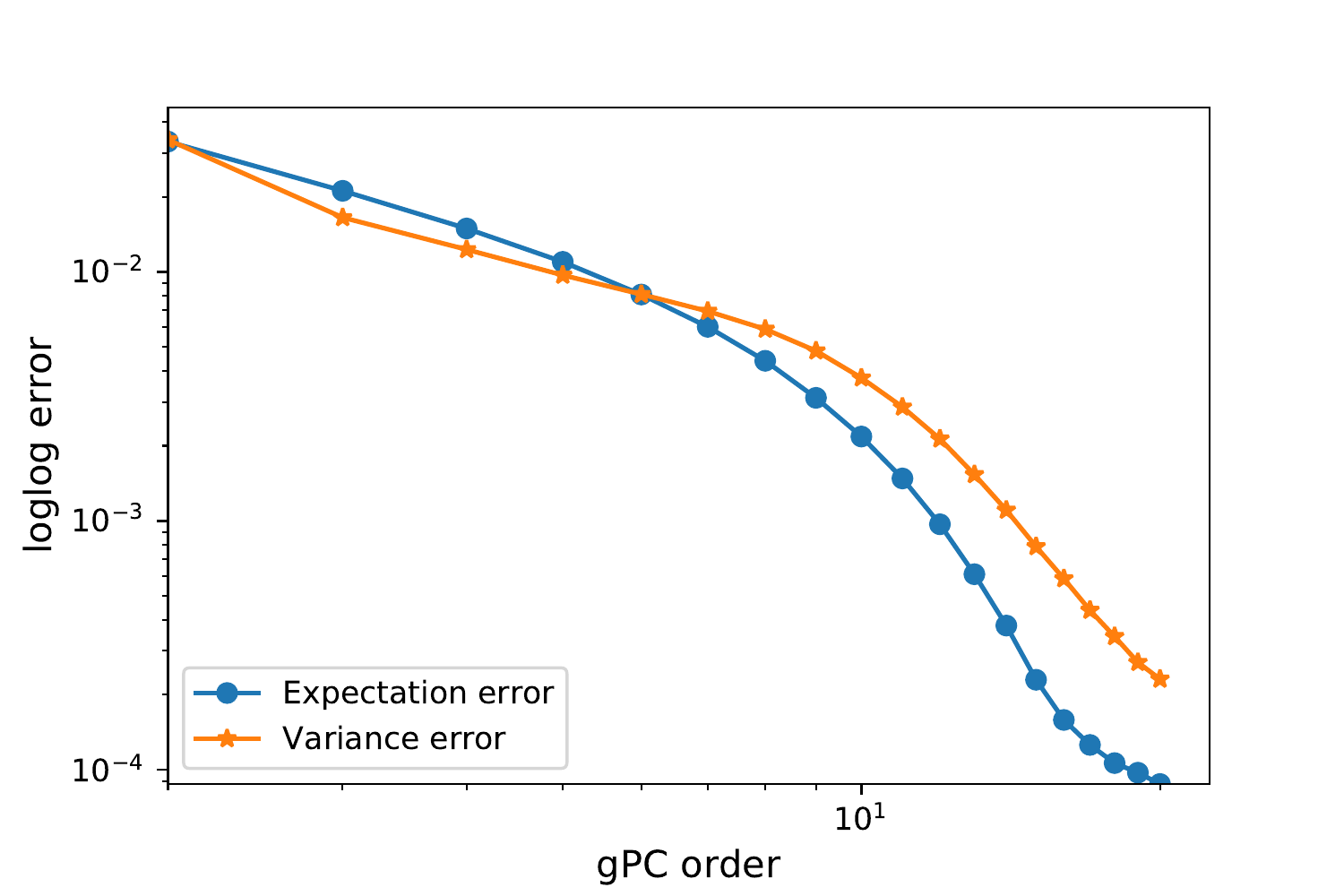}
  \caption{Example 1. The second order finite difference approximation $\Dx=0.005$, $\Dt=\dfrac{1}{5}\Dx$: the gPC error versus the gPC order by a log-log plot
(with other numerical parameters fixed).}
  \label{mverrlog2}
\end{figure}


\subsection{Example 2: the Liouville equation with a discontinuous potential}

Recall the Liouville equation
\begin{equation}
  u_t+v u_x-V_x u_v=0, \quad t>0, \quad x,v\in R,
\end{equation}
with the random potential given by
\begin{equation}
  V(x,y)=V_0(x)+0.1xz,
\end{equation}
where $z$ is uniformly distributed on $(-1,1)$ and 
\begin{equation}
  V_0(x)=
  \begin{cases}
    0.2, \quad &x<0, \\
    0, \quad &x>0.
  \end{cases}
\end{equation}

For the given initial data, one cannot get an analytic solution for this problem. Instead we will use the {\it collocation method} as a comparison. In collocation method, one solves the Liouville equation~(\ref{eq_Liou}) at a discrete set of $\{z_i\}_{1\leq i\leq M}$ called sample points in the corresponding random space. For every fixed $z_i$, we only need to solve a {\it deterministic} Liouville equation with discontinuous potential using Hamilton preserving scheme~\cite{Wen:2005ueba}. Then the expectation and variance can be obtained by the quadrature rules of~(\ref{exp}) and~(\ref{var}). In the following examples, we choose $\{z_i\}_{1\leq i\leq M}$ as the roots of $M$th order Legendre polynomials and use the Gauss-Legendre quadrature to obtain the expectation and variance.

For the gPC method we need to evaluate $\int_{-1}^1 V_0(z)P_j(z)P_k(z) \rho(z)\diff z$, which, for this simple case, is given by
\begin{equation}
  \int_{-1}^1 V_0(z)P_j(z)P_k(z) \rho(z)\diff z =
  \begin{cases}
    \dfrac{j+1}{\sqrt{(2j+1)(2j+3)}}, &k=j+1, \\
    V'_0(x), &k=j, \\
    \dfrac{j}{\sqrt{4j^2-1}}, &k=j-1.
  \end{cases}
\end{equation}
Here one has a symmetric tridiagonal matrix.

As an illustration of the singularity of the solution caused by the discontinuous potential , we use a continuous initial data:
\begin{equation}
  u(x,v,0)=
  \begin{cases}
  \sin[2\pi(0.25-(x^2+v^2))], &x^2+v^2<0.25, \\
  0, &\text{otherwise}.
  \end{cases}
\label{ex2-init2}
\end{equation}
The expectation of the solution by using the collocation method with $M=20$ sample points and our new gPC-SG method with gPC order $K=4$ are shown in Figure~\ref{8}. 
\begin{figure}[htbp]
  \includegraphics[width=0.5\textwidth]{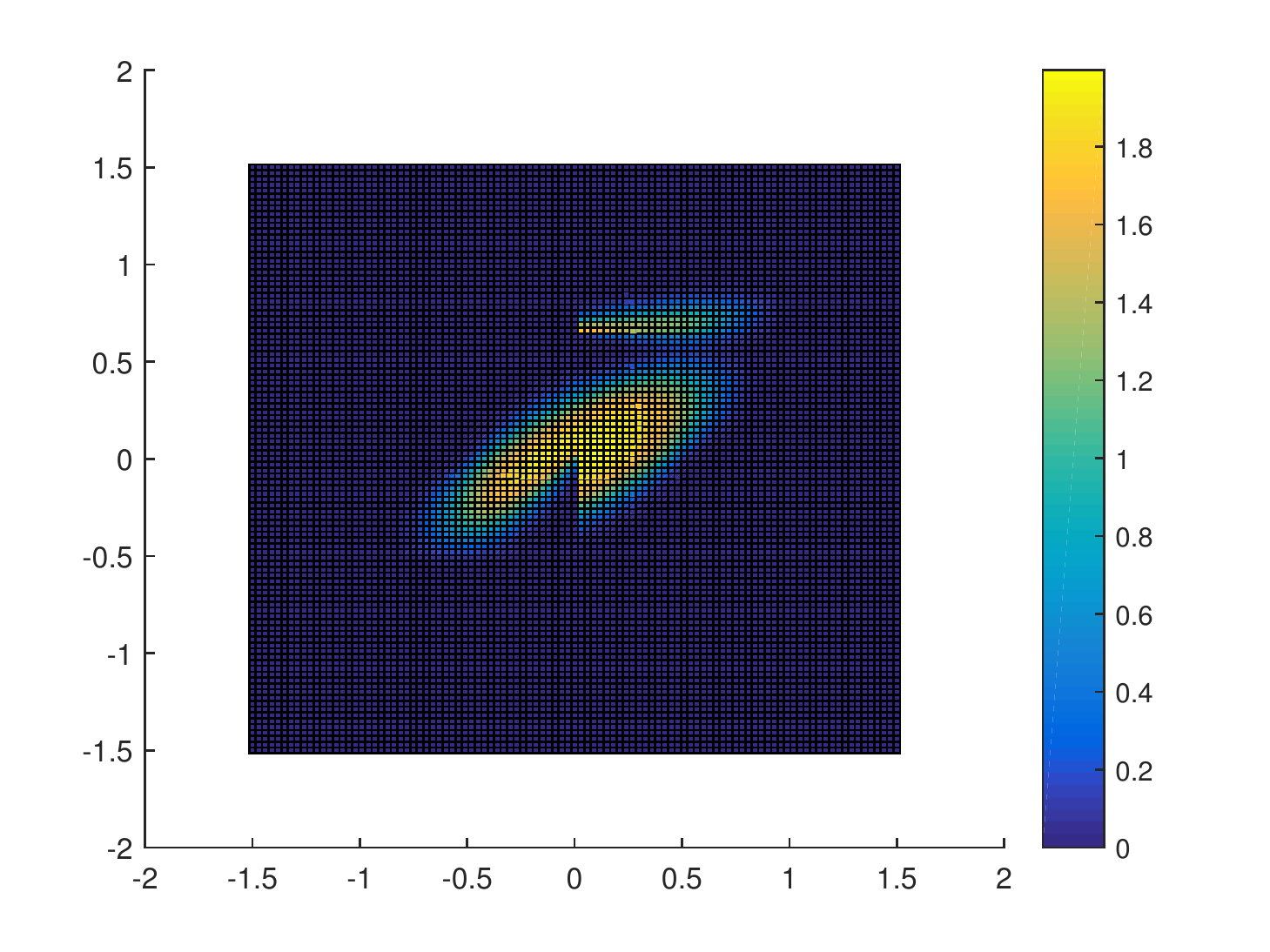}
  \includegraphics[width=0.5\textwidth]{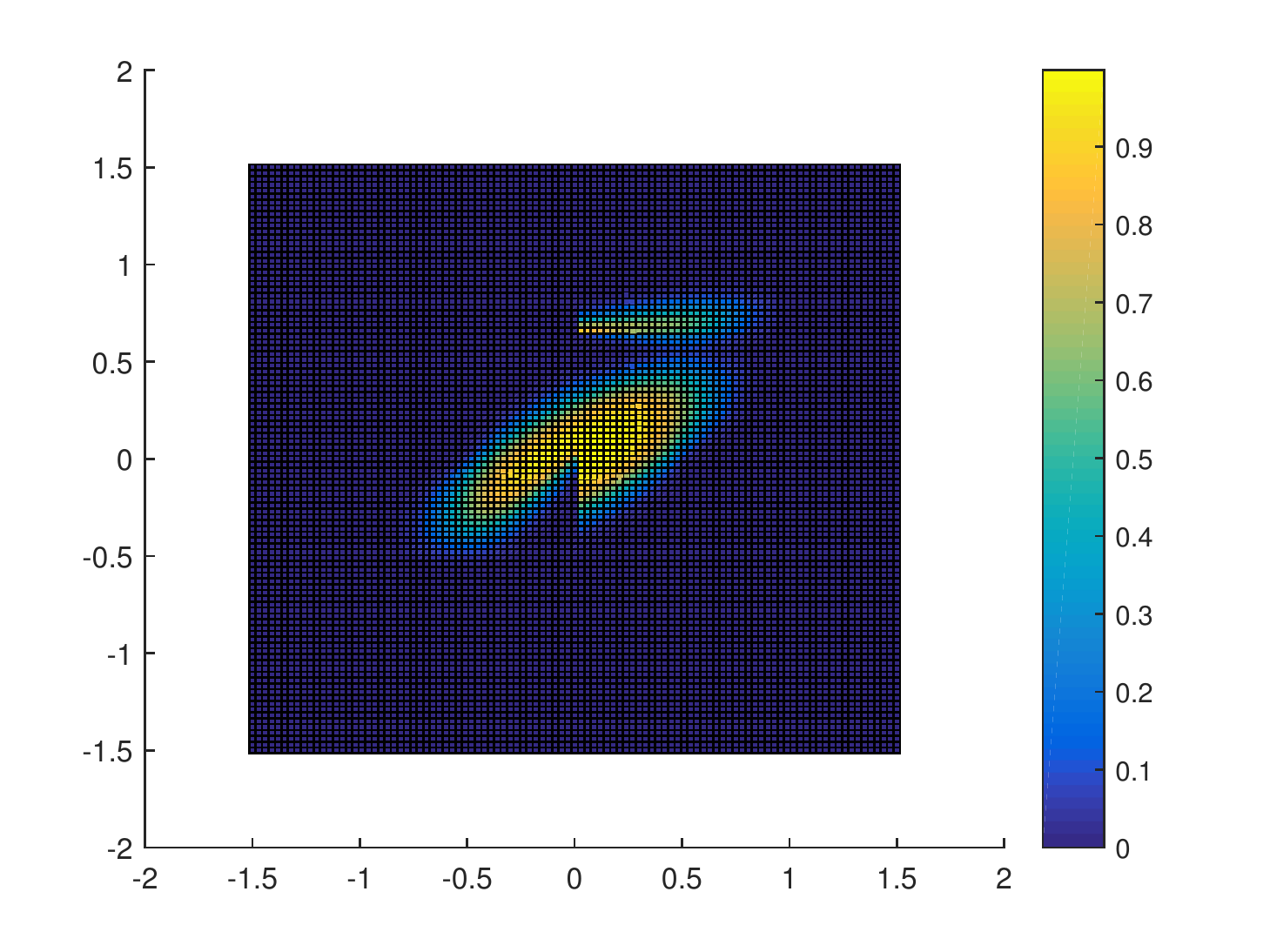}
  \caption{Example 2 with initial data (\ref{ex2-init2}). Expectation of the solution. Left: the collocation method with $20$ sample points. Right: the new gPC-SG method with gPC order $K=4$.}
  \label{8}
\end{figure}
Although the initial data is continuous, due to the interface condition, the solution may still be discontinuous. This singularity will have a big impact on the convergence of gPC method.

\subsubsection{The first order finite difference approximation}
In this example we set the initial data as
\begin{equation}
  u(x,v,0)=
  \begin{cases}
    1, \quad &x\geq 0, v<0, x^2+v^2<1, \\
    1, \quad &x\leq 0, v>0, x^2+v^2<1, \\
    0, \quad &\text{otherwise}.
  \end{cases}
\label{ex2-init1}
\end{equation}
Notice that the solution has singularity due to both the initial data and the discontinuous potential. The deterministic version of this example was used in~\cite{Wen:2005ueba} and the analytic solution can be obtained by using the method of characteristics. We first plot the analytic solution and numerical solution (using the first order flux) with a fixed $z=0$ in Figure~\ref{9} corresponding to the deterministic example in~\cite{Wen:2005ueba}.
\begin{figure}
  \includegraphics[width=0.5\textwidth]{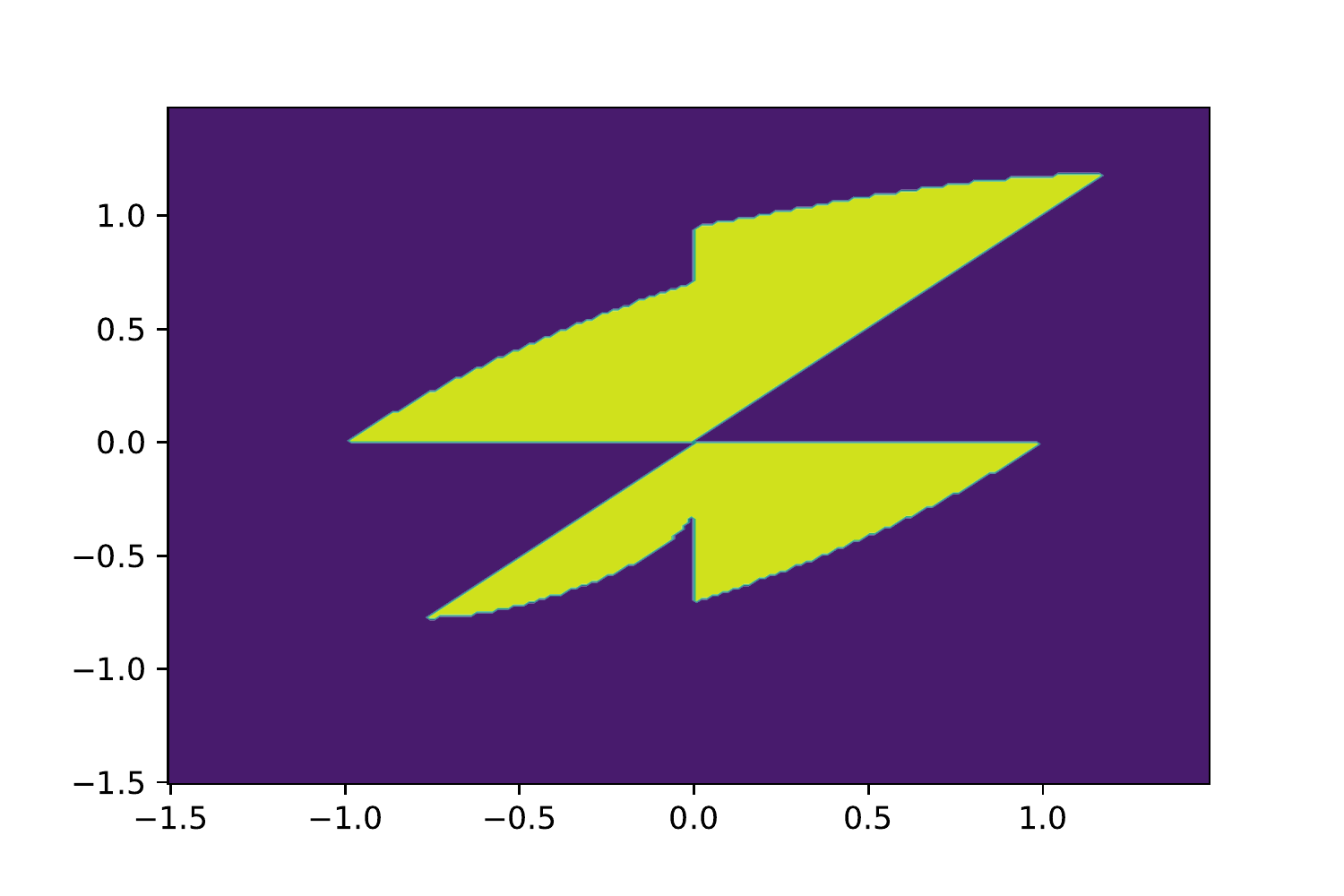}
  \includegraphics[width=0.5\textwidth]{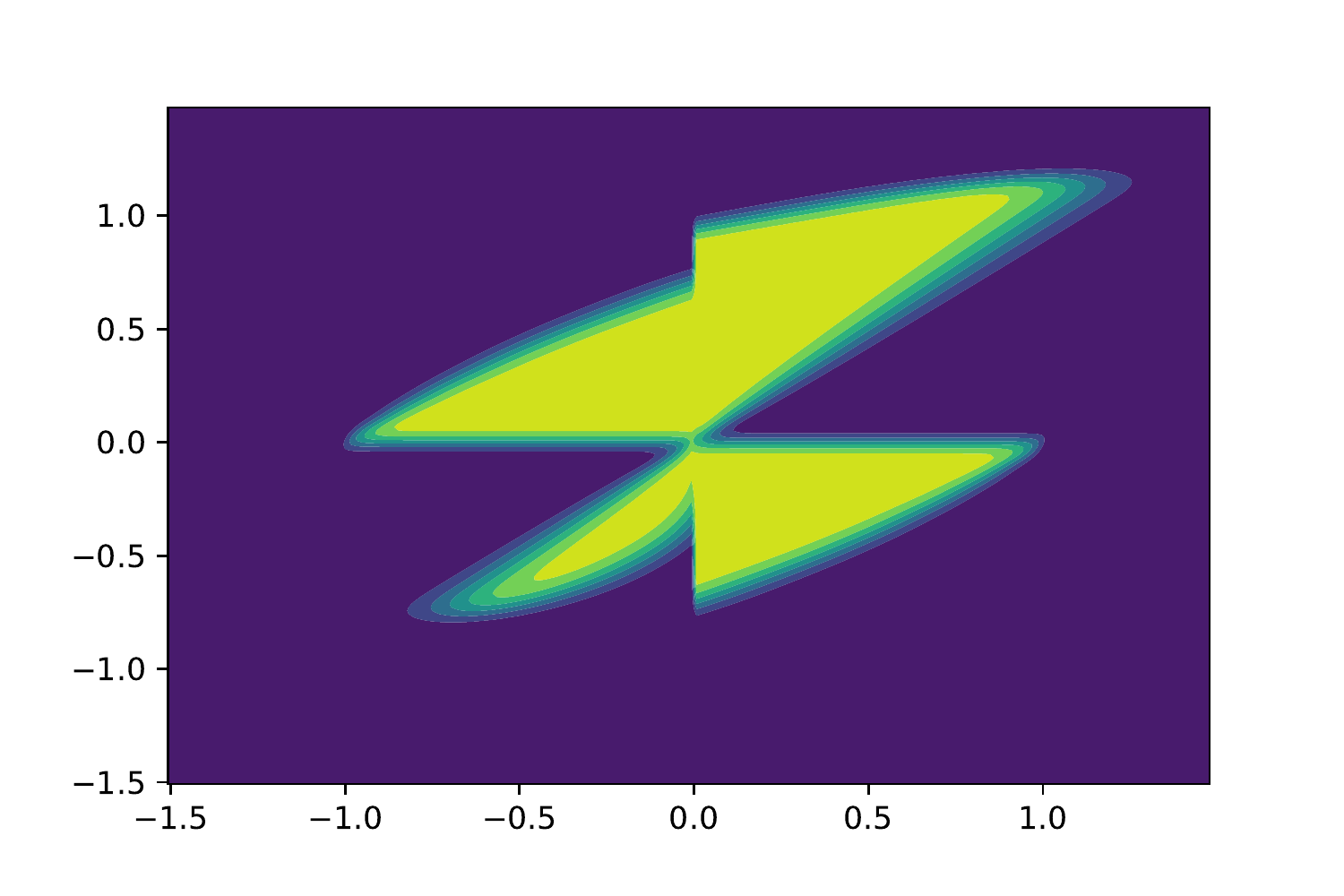}
  \caption{The deterministic case of Example 2 with initial data (\ref{ex2-init1}). Left: analytic solution of the deterministic problem with $z=0$ and $t=1$. Right: numerical solution using the first order Hamiltonian preserving scheme with $\Delta x = \Delta v = 0.015$, $\Delta t = 0.001$}
  \label{9}
\end{figure}

Then we compare the solution computed by the collocation method with $M=20$ sample points (Figures~\ref{6} and~\ref{15} left). Figures~\ref{6} and~\ref{15} right show the solutions  by our new gPC-SG method with gPC order $K=10$. Here the mesh size is $\Dx=\Dv=0.03$ and time step is $\Dt=0.002$. One can see the difference between the expectation of the stochastic solution and the deterministic case when $z=0$ and this differences can be easily seen on the variance plots as well. The expectation of the stochastic solution is expected to be smoother
since it integrates over the $z$ variable, thus gains on order of regularity
(see examples in \cite{Des, HJX}).
For the computation cost, our new gPC-SG method runs much faster than collocation method. The collocation method takes about $20$ times cost of the deterministic version due to $20$ sample points we choose, however, our new gPC-SG,
with $K=10$,  takes about $10$ times the cost of the deterministic problem.

\begin{figure}[htbp]
  \includegraphics[width=0.5\textwidth]{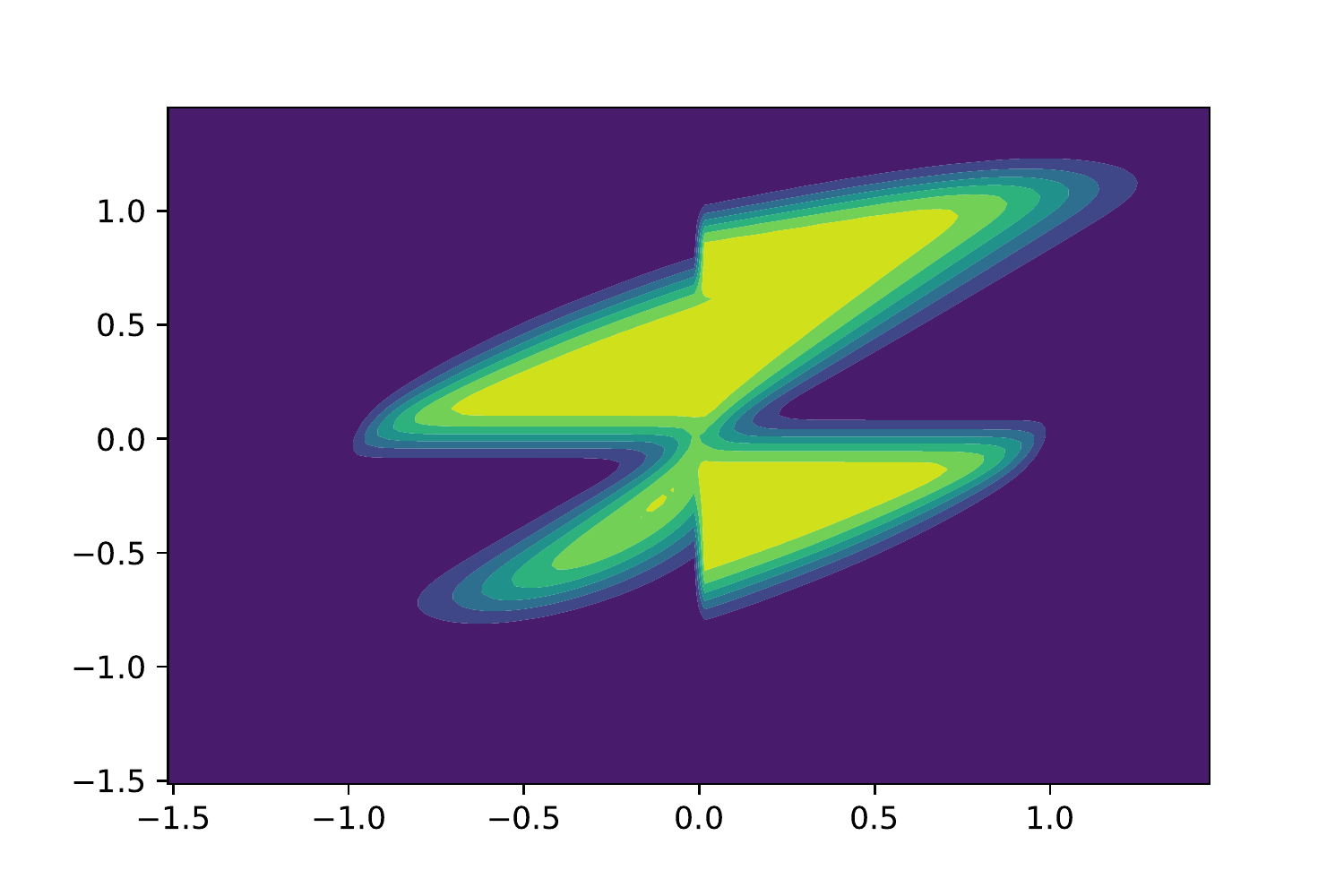}
  \includegraphics[width=0.5\textwidth]{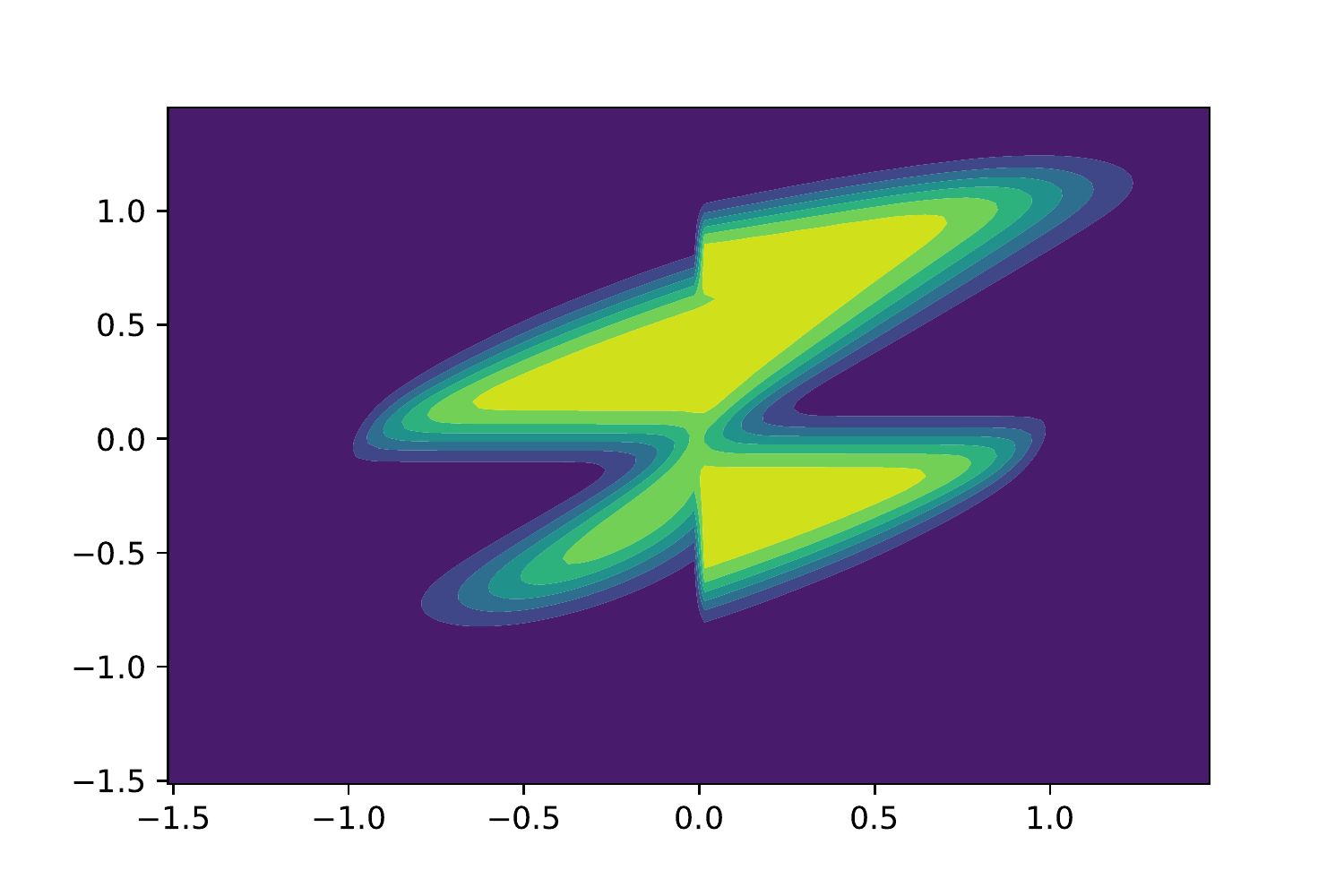}
  \caption{Example 2 with initial data (\ref{ex2-init1})  by the first order 
finite difference approximation  with $\Dx=\Dv=0.03$ and $\Dt=0.002$.  The expectation of the solution. Left: the collocation method with $M=20$ samples points. Right: the new gPC-SG method using first order finite difference approximation.}
  \label{6}
\end{figure}
\begin{figure}[htbp]
  \includegraphics[width=0.5\textwidth]{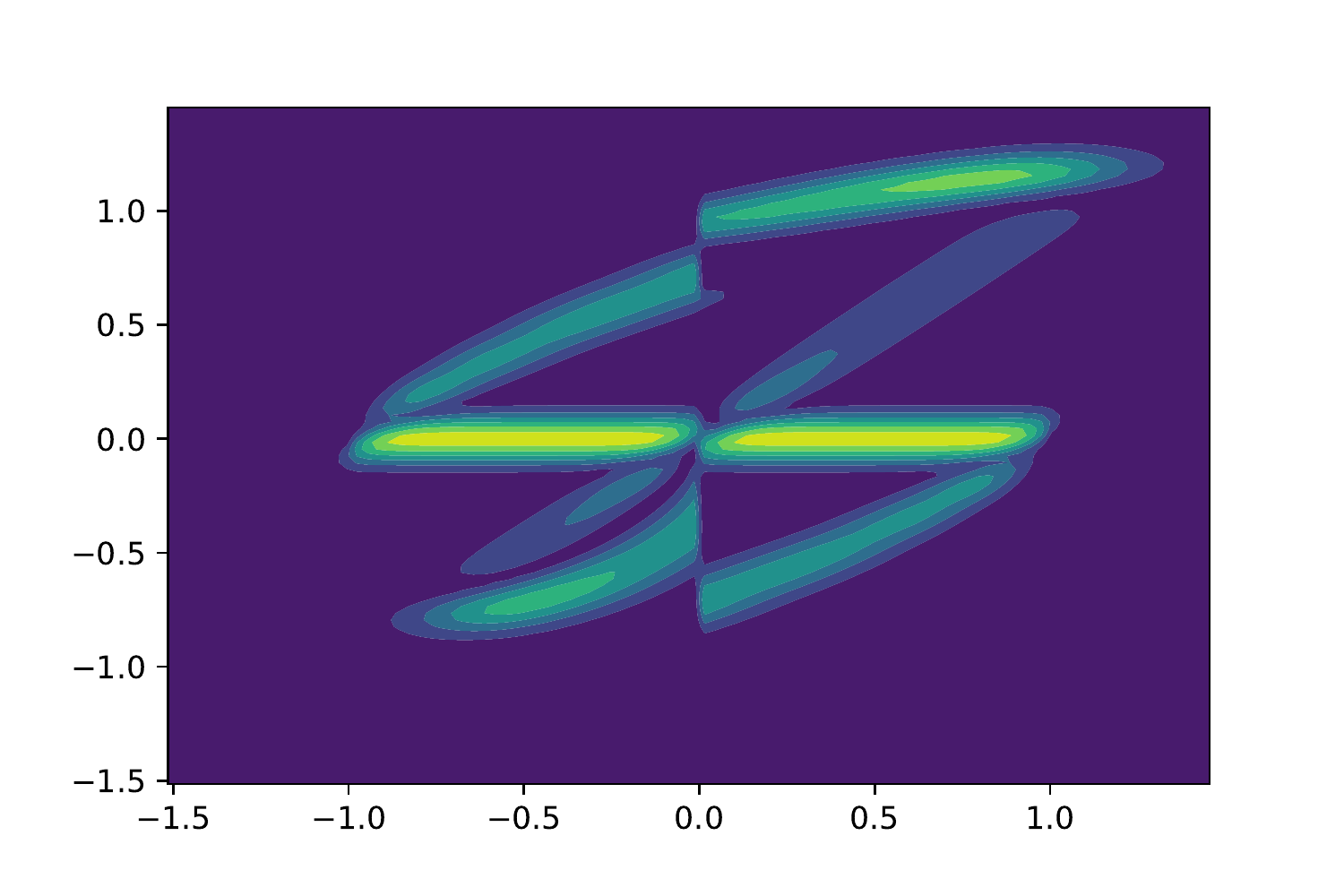}
  \includegraphics[width=0.5\textwidth]{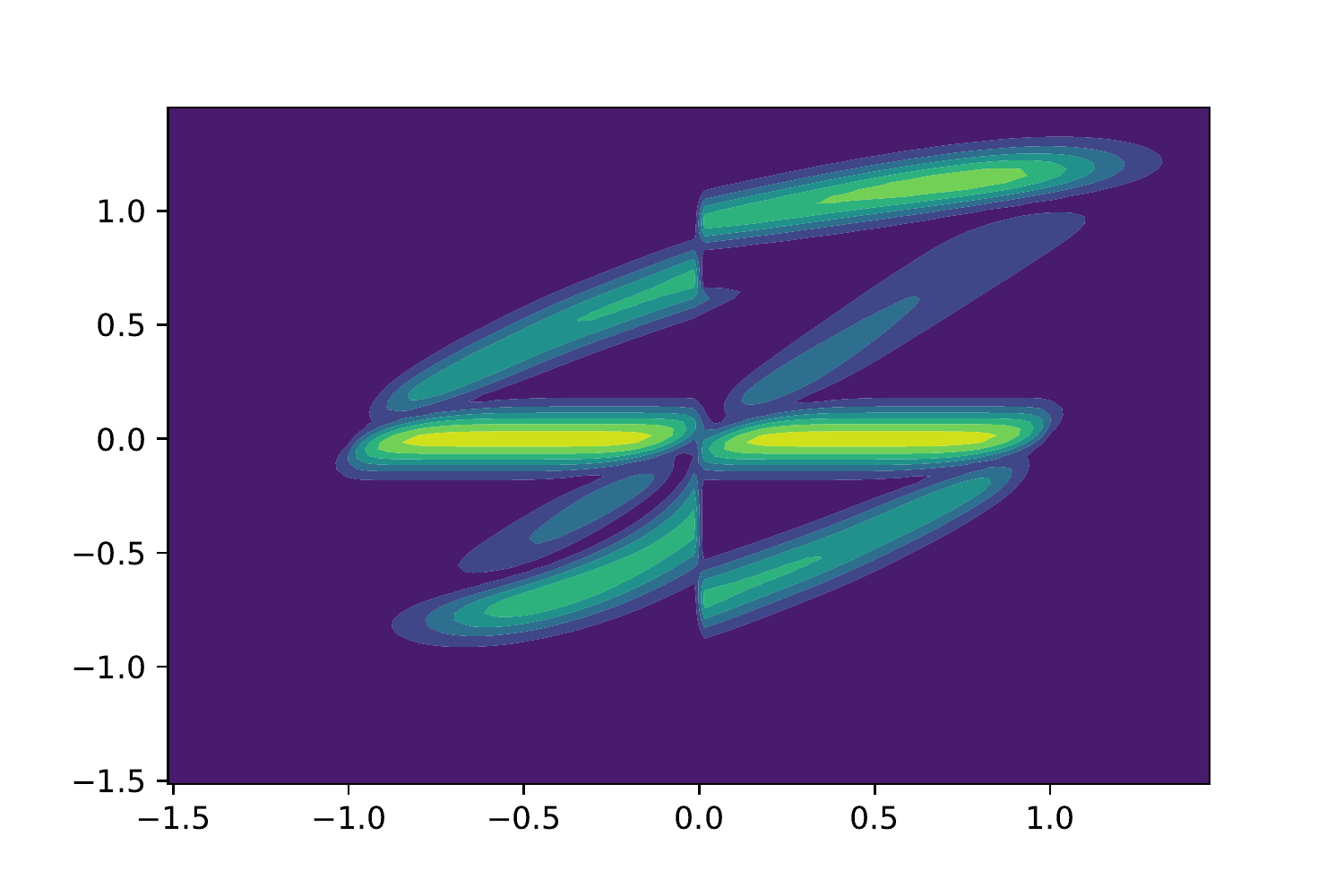}
  \caption{Example 2 with initial data (\ref{ex2-init1}) by the first order 
finite difference approximation  with $\Dx=\Dv=0.03$ and $\Dt=0.002$. The variance of the solution. Left: the collocation method. Right: the new gPC-SG 
method.}
  \label{15}
\end{figure} 

In Figure~\ref{7} we plot the $\ell^1$ error of the discrete gPC-SG method as the gPC order $K$ increases. This figure shows the spectral convergence.
\begin{figure}[htbp]
  \includegraphics[width=0.5\textwidth]{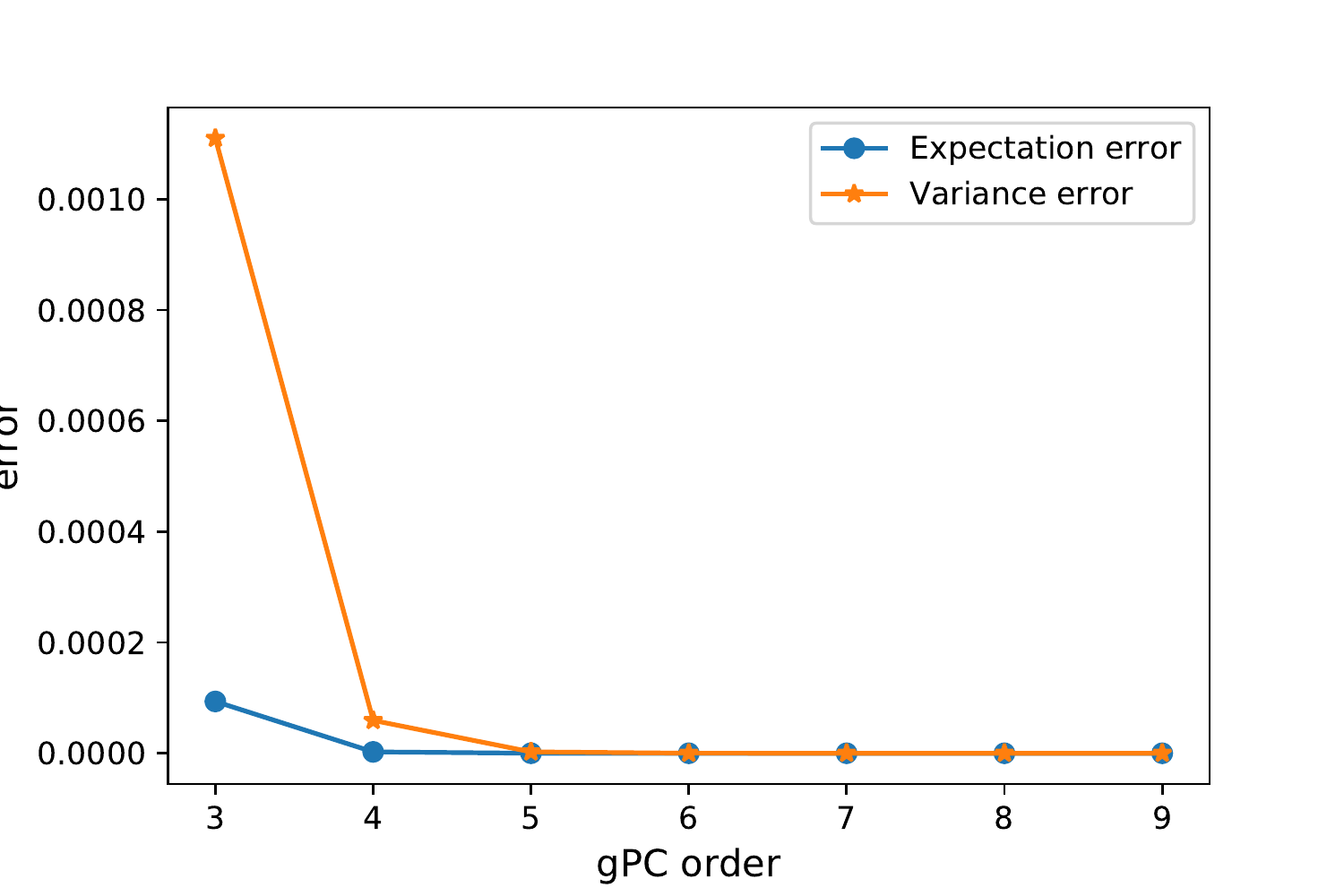}
  \includegraphics[width=0.5\textwidth]{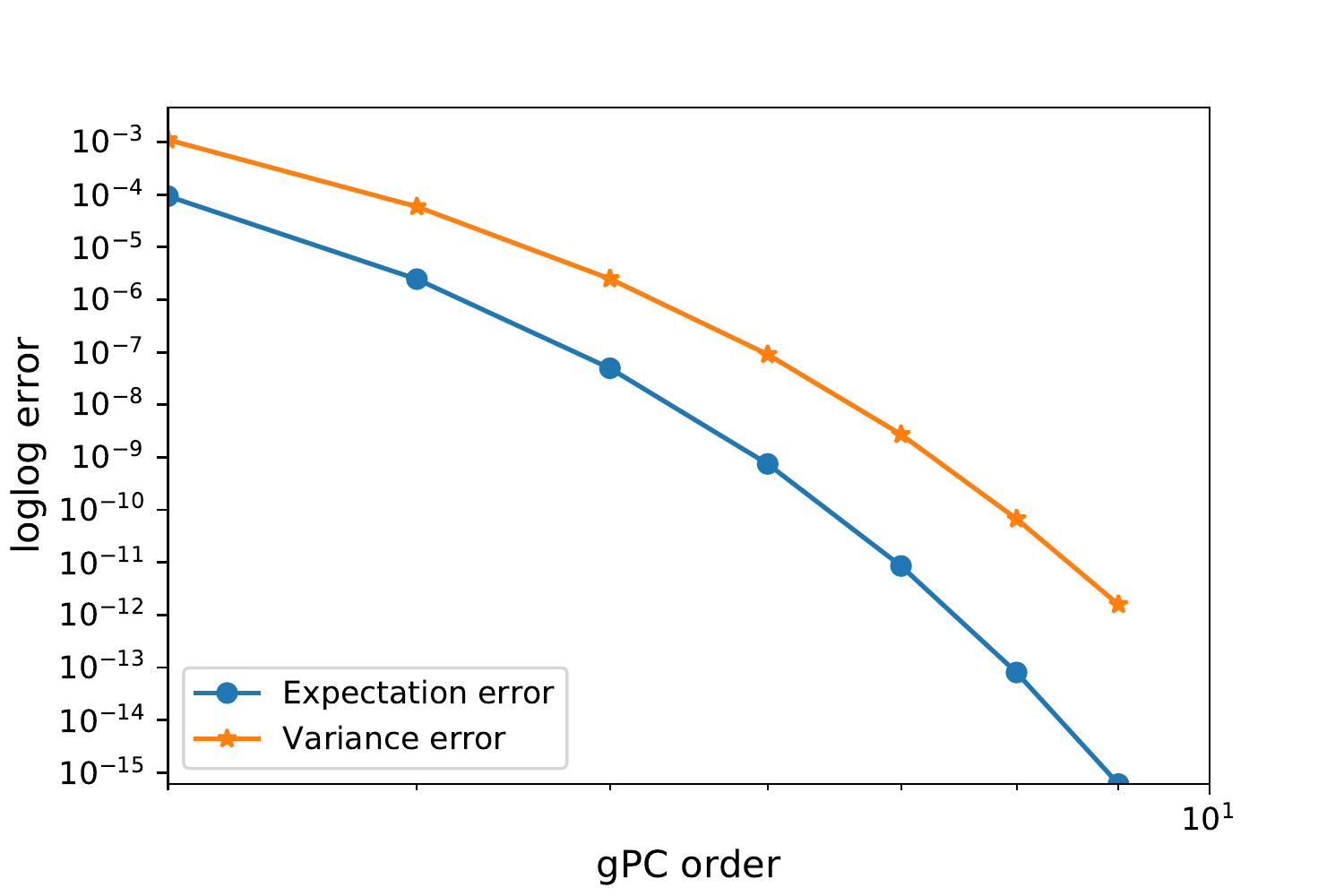}
  \caption{Example 2 with initial data (\ref{ex2-init1}). Convergence of the new gPC-SG method using the first order finite difference approximation. Left: the $\ell^1$ error versus gPC order. Right: the gPC error versus the gPC order by a log-log plot (with other numerical parameters fixed).}
  \label{7}
\end{figure}

\subsubsection{The second finite difference approximation}
Here for the second finite difference approximation we still use the same set up as in previous subsection for the first order case. 

First as in the first order case, we will show the numerical solution of the deterministic case when $z=0$ using the second order flux in Figure~\ref{deter_sec}.  The second order method clearly gives a much sharper resolution for the discontinuities than the first order method (compare the right figures of
Figure 10 and Figure 15). But due to the Lax-Wendroff scheme we use in the $v$-direction~(\ref{vflux}), there exists some oscillations due to numerical dispersion.
\begin{figure}
  \includegraphics[width=0.5\textwidth]{exact}
  \includegraphics[width=0.5\textwidth]{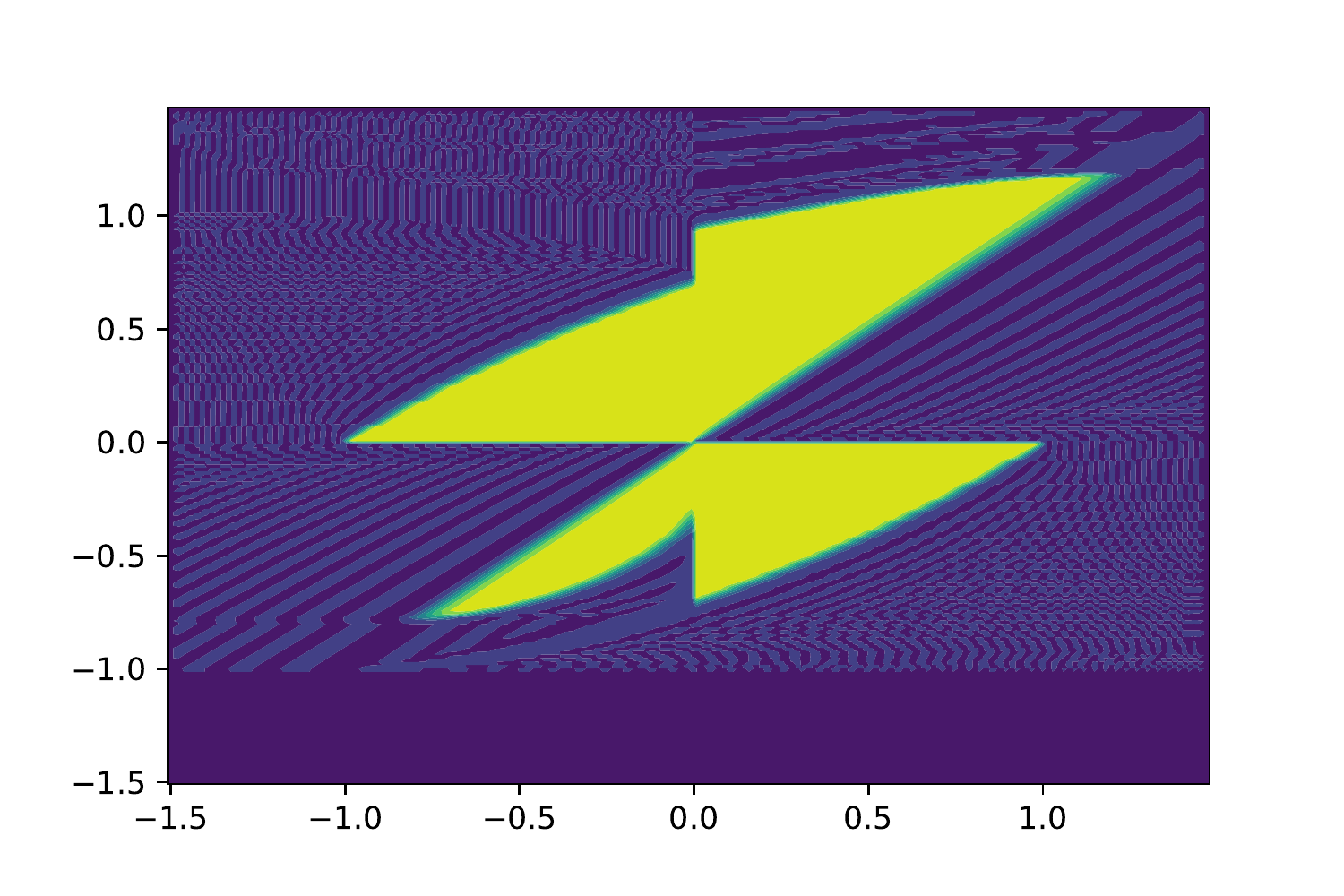}
  \caption{The deterministic case of Example 2 with initial data~(\ref{ex2-init1}). Left: analytic solution of the deterministic problem with $z=0$ and $t=1$. Right: numerical solution using the second order Hamiltonian preserving scheme with $\Delta x = \Delta v = 0.015$, $\Delta t = 0.001$}
  \label{deter_sec}
\end{figure}

Then we will show the expectation and variance of the solution calculated by the collocation method with $M=20$ sample points and our new gPC-SG method (for the calculation of $\left<\mathrm{RHS}(z)\right>$ we also use 20 Gauss-Legendre quadrature points). See Figure~\ref{11} and Figure~\ref{12}.
\begin{figure}
  \includegraphics[width=0.5\textwidth]{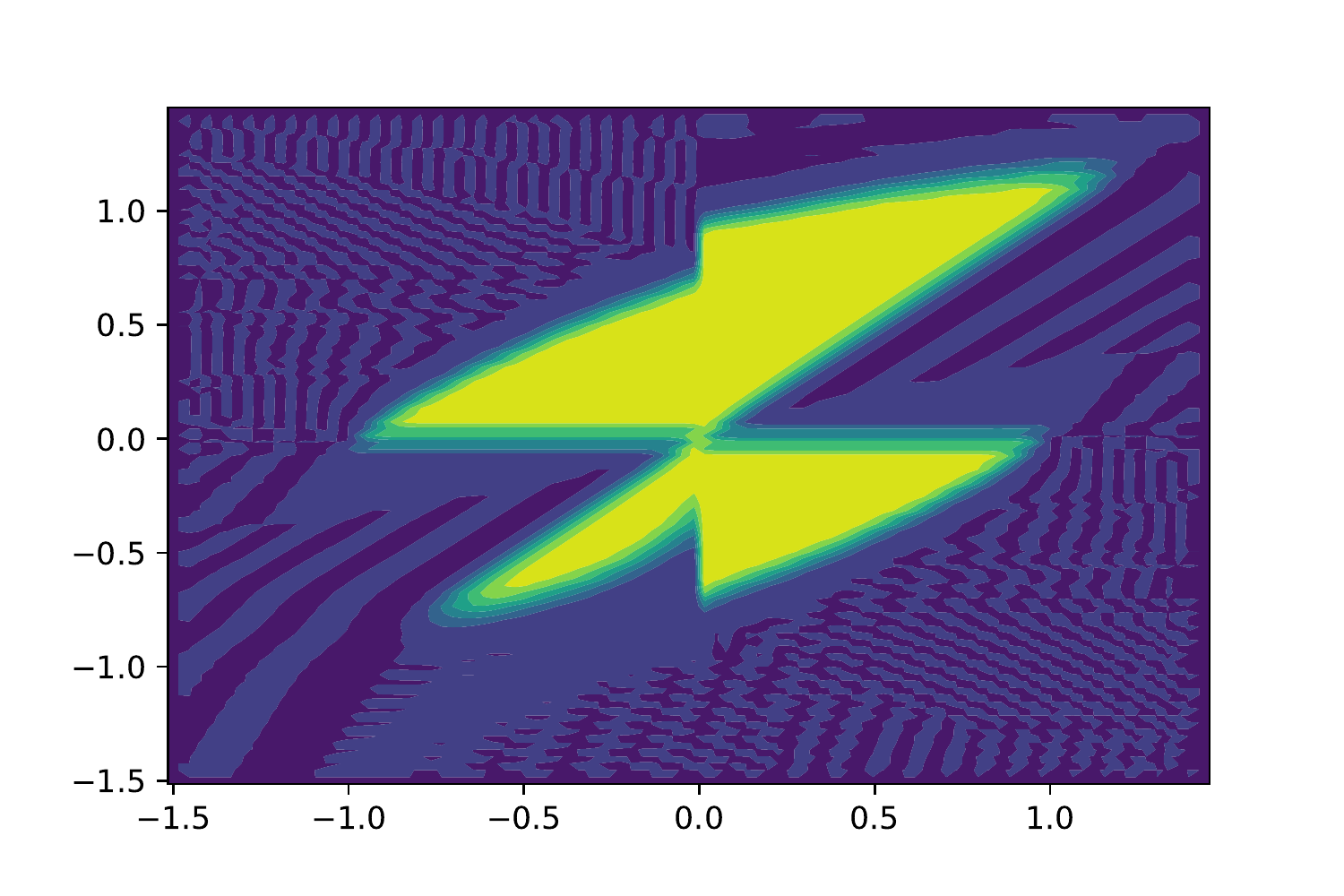}
  \includegraphics[width=0.5\textwidth]{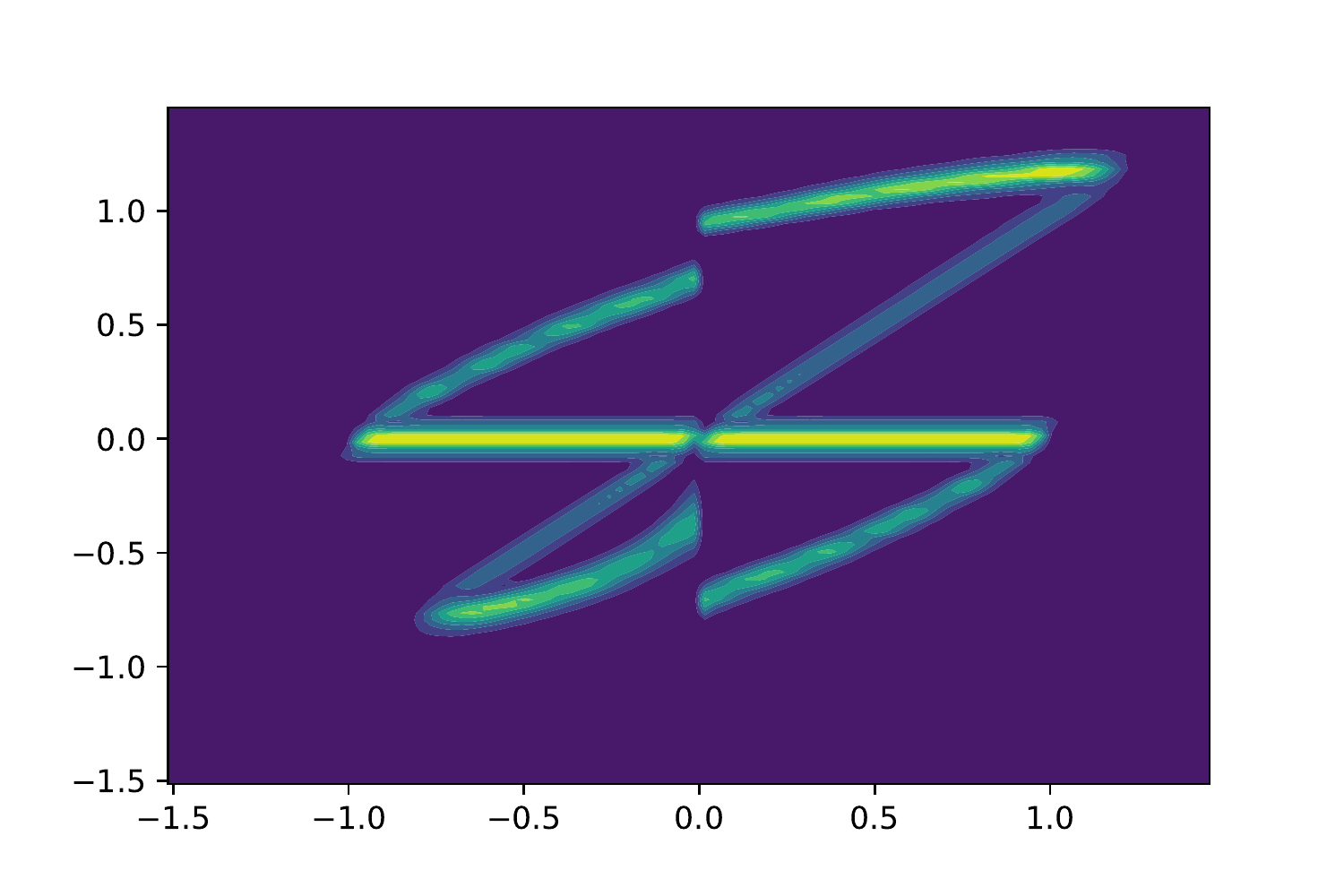}
  \caption{Example 2 with initial data (\ref{ex2-init1}) by the second order 
finite difference approximation  with $\Delta x = \Delta v = 0.03$, $\Delta t = 0.002$. Reference solution 
 by the collocation method with 20 sample points at $t=1$. Left: expectation. Right: variance.}
  \label{11}
\end{figure}
\begin{figure}
  \includegraphics[width=0.5\textwidth]{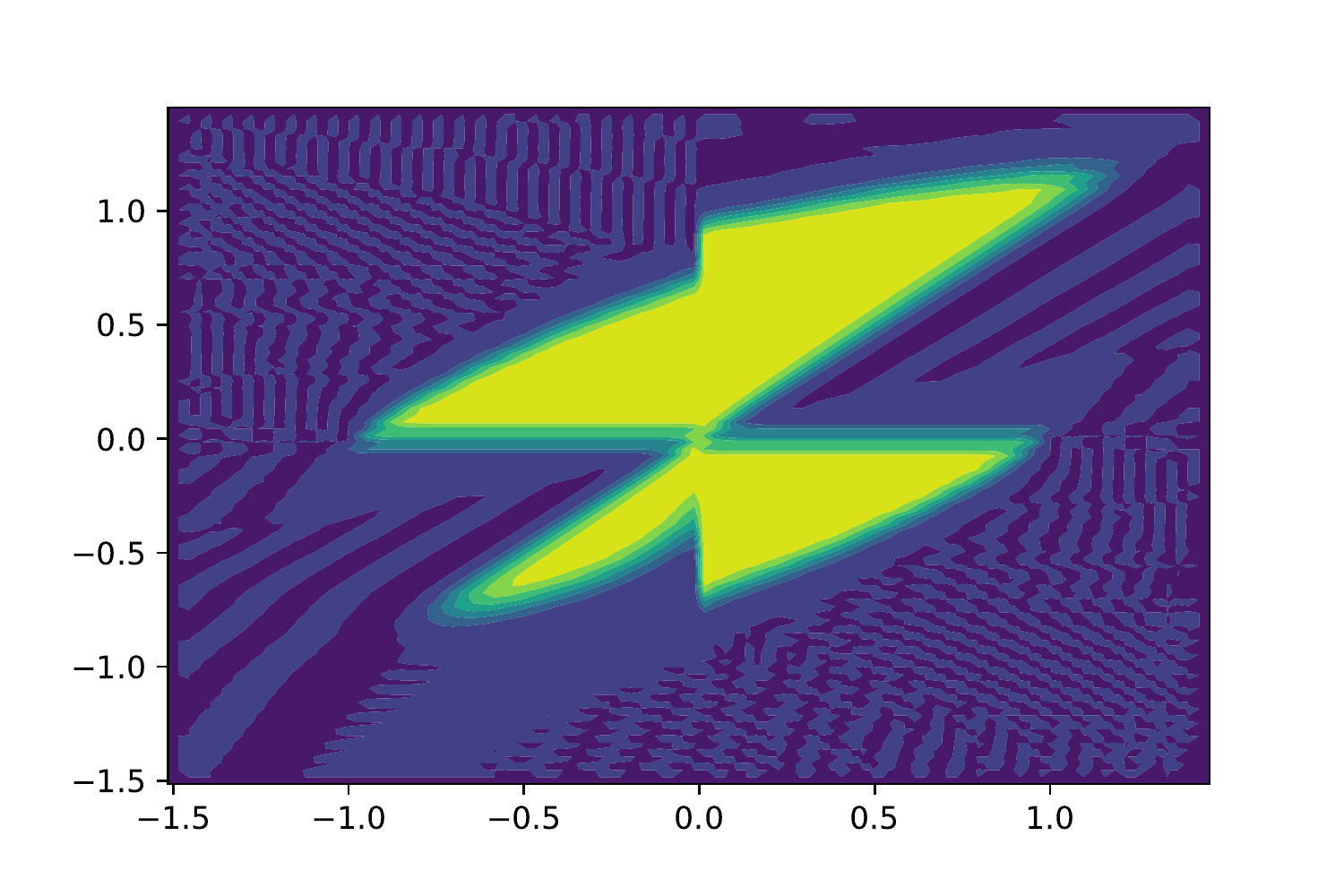}
  \includegraphics[width=0.5\textwidth]{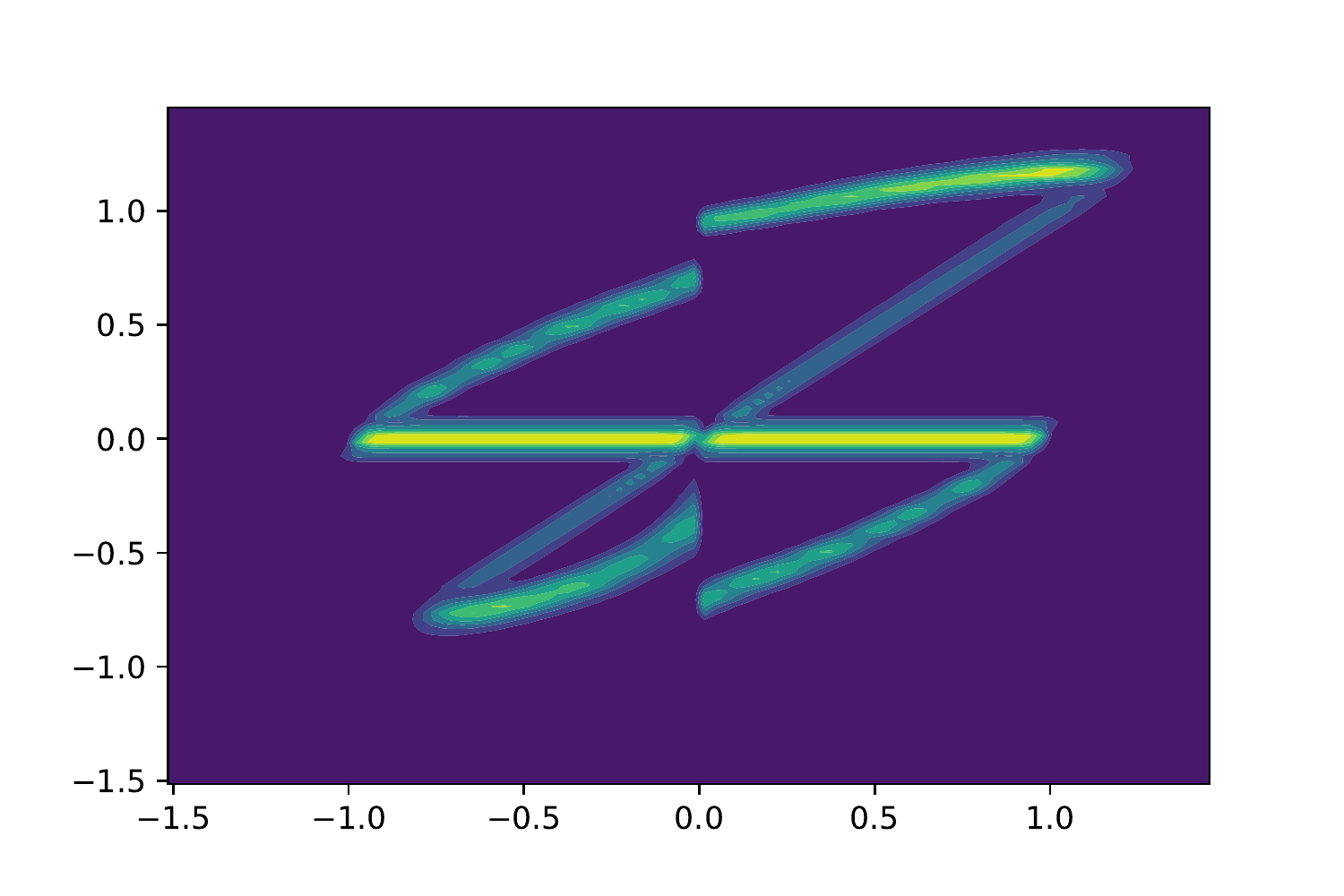}
  \caption{Example 2 with initial data (\ref{ex2-init1}) by the second order 
finite difference approximation  with $\Delta x = \Delta v = 0.03$, $\Delta t = 0.002$. Solution at $t=1$ computed by the new gPC-SG method. Left: expectation. Right: variance.}
  \label{12}
\end{figure}
One can find no difference between these two methods, both giving sharper
resolutions at discontinuities than their first order counterparts shown in
Figures 11 and 12. Here for the computation cost, we point out that unlike in the first order case, our new gPC-SG method runs only slightly faster than the collocation method since in the calculation of $\left<\mathrm{RHS}(z)\right>$ we use a similarly technique as the collocation method.

Finally, we test the convergence rate of our new gPC-SG method. To do this, we first fix our mesh size: $\Delta x = \Delta v = 0.03$, $\Delta t = 0.02$, and output the result at $t=1$. We  use $20$ Gauss-Legendre quadrature points to compute the inner product in (\ref{gPC2}). We choose the gPC order $K=10$ as our reference solution, and see how the error changes when increasing $K$ from $3$ to $10$. From Figure~\ref{14}, an exponential convergence can be observed.
\begin{figure}
  \includegraphics[width=0.5\textwidth]{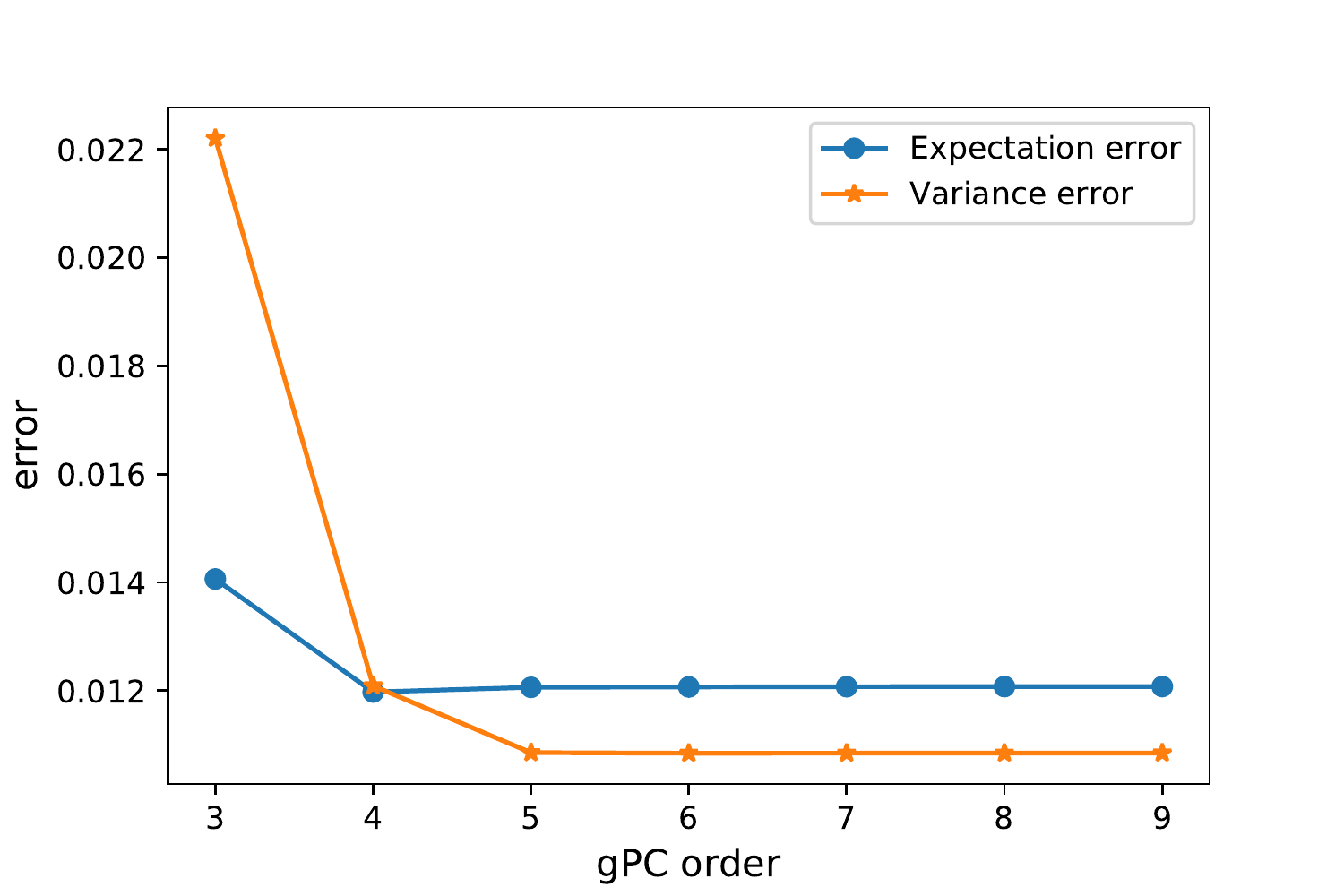}
  \includegraphics[width=0.5\textwidth]{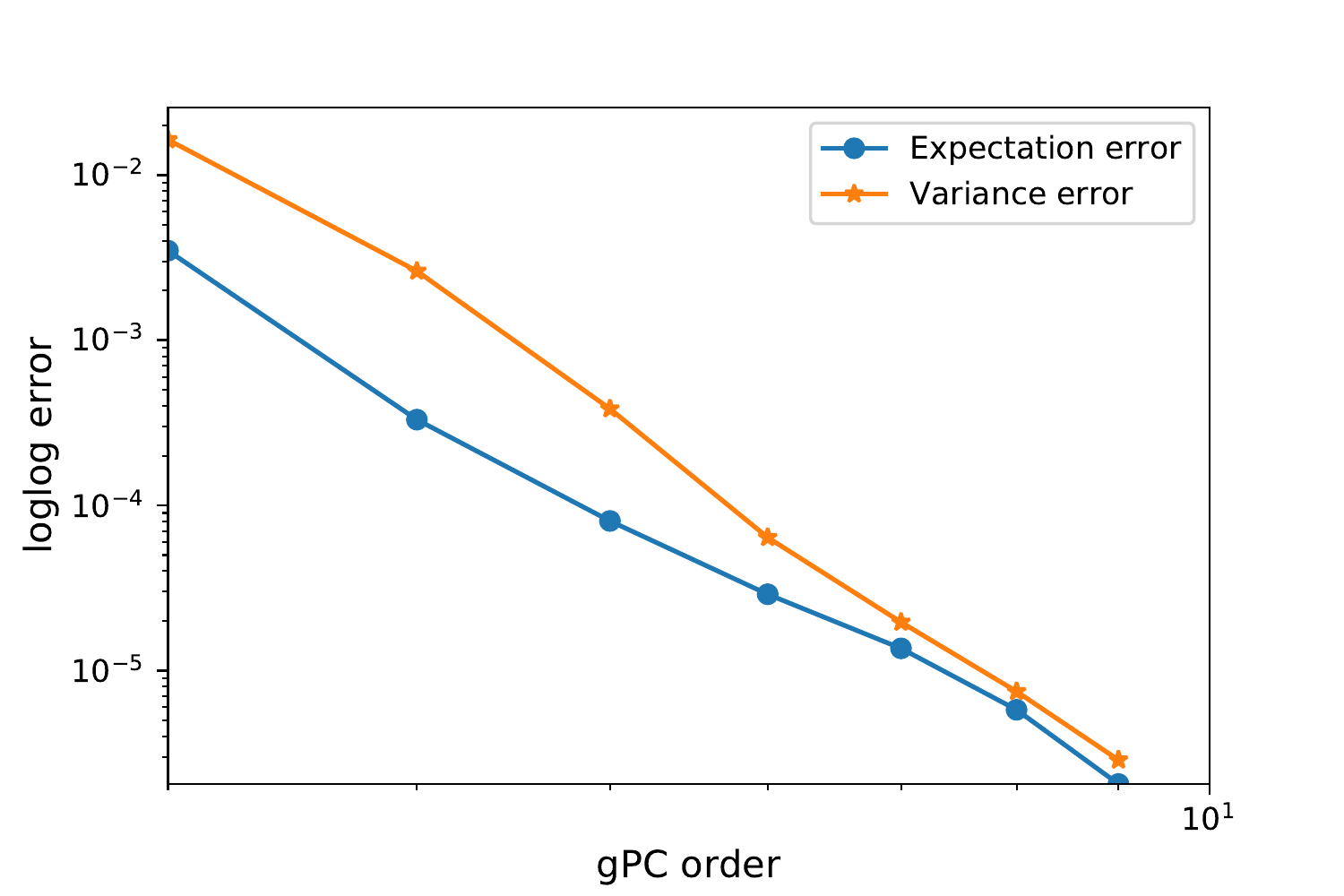}
  \caption{Example 2 with initial data (\ref{ex2-init1}). Convergence of the new gPC-SG method using second order finite difference approximation. Left: the $\ell^1$ error versus the gPC order. Right: the gPC error versus the gPC order by a log-log plot (with other numerical parameters fixed).}
  \label{14}
\end{figure}


\begin{thebibliography}{00}




\bibitem{Bijl:2013hkba} H. Bijl, D. Lucor, S. Mishra, C. Schwab, Uncertainty Quantification in Computational Fluid Dynamics, Springer, Cham, 2013. doi:10.1007/978-3-319-00885-1.


\bibitem{Quarteroni:1982bmba} C. Canuto, A. Quarteroni, Approximation results for orthogonal polynomials in Sobolev spaces, Math. Comp. 38 (1982) 67-86. doi:10.1090/S0025-5718-1982-0637287-3.

\bibitem{Liu:1998faba} H. Choi, J.G. Liu, The reconstruction of upwind fluxes for conservation laws: its behavior in dynamic and steady state calculations, J. Comput. Phys. 144 (1998) 237-256. doi:10.1006/jcph.1998.5970.

\bibitem{Des} B. Despres, G. Poette, and D. Lucor. Robust uncertainty propagation in systems of conservation laws with the entropy closure method. In
Uncertainty Quantication in Computational Fluid Dynamics,
Volume 92 of
Lect. Notes Comput. Sci. Eng.,  105-149. Springer, Heidelberg, 2013

\bibitem{GS} R. G. Ghanem and P. D. Spanos. Stochastic Finite Elements: A Spectral Approach. Springer Verlag,
New York, 1991.

\bibitem{GX} D. Gottlieb and D. Xiu, Galerkin method for wave equations with
uncertain coefficients, Comm. Comp. Phys. 3, 505-518, 2008.

\bibitem{GWZ} Max D. Gunzburger, Clayton G. Webster, and Guannan Zhang. Stochastic finite element
methods for partial differential equations with random input data. Acta Numer., 23 (2014), 521-650.

\bibitem{Jin:2009pro} S. Jin, Numerical methods for hyperbolic systems with singular coefficients: well-balanced scheme, Hamiltonian preservation and beyond, Proc. of the 12th International Conference on Hyperbolic Problems: Theory, Numerics, Applications, Univeristy of Maryland, College Park. Proceedings of Symposia in Applied Mathematics Vol 67-1, 93-104, 2009, American Mathematical Society.

\bibitem{HJX}  J. Hu, S. Jin, and D. Xiu, A stochastic Galerkin method for Hamilton-Jacobi equations with uncertainty, SIAM J. Sci. Comput. 37, A2246-A2269, 2015. 

\bibitem{Novak:2006cpba} S. Jin, K.A. Novak, A Semiclassical Transport Model for Thin Quantum Barriers, Multiscale Model. Simul. 5 (2006) 1063-1086. doi:10.1137/060653214.

\bibitem{Qi:2013byba} S. Jin, P. Qi, $\ell^1$-error estimates on the immersed interface upwind scheme for linear convection equations with piecewise constant coefficients: A simple proof, Science China Mathematics 56 (2013), 2773-2782. doi:10.1007/s11425-013-4738-2.



\bibitem{Wen:2005ueba} S. Jin, X. Wen, Hamiltonian-preserving schemes for the Liouville equation with discontinuous potentials, Commun. Math. Sci. 3 (2005) 285-315.

\bibitem{JinWen-wave} S. Jin and X. Wen, Hamiltonian-preserving schemes for the Liouville equation of geometrical optics with partial transmissions and reflections, SIAM J. Num. Anal. 44 (2006), 1801-1828.

\bibitem{Tadmor:2000jlba} A. Kurganov, E. Tadmor, New High-Resolution Central Schemes for Nonlinear Conservation Laws and Convection Diffusion Equations, J. Comput. Phys. 160, 241-282, (2000). doi:10.1006/jcph.2000.6459.

\bibitem{LMK} O. P. Le Maitre and O. M. Knio. Spectral Methods for Uncertainty Quantification, Scientific
Computation, with Applications to Computational Fluid Dynamics. Springer, New York,
2010.



\bibitem{LeVeque} R.J. LeVeque, Finite Volume Methods for Hyperbolic Problems,
Cambridge University Press, 2002.


\bibitem{Motamed:2012gsba} M. Motamed, F. Nobile, R. Tempone, A stochastic collocation method for the second order wave equation with a discontinuous random speed, Numer. Math. 123 (2012) 493-536. doi:10.1007/s00211-012-0493-5.

\bibitem{PIN} M. P. Pettersson, G. Iaccarino and J. Nordstr\"om, Polynomial Chaos Methods for Hyperbolic Differential Equations, Springer, Switzerland, 2015.

\bibitem{Tadmor:1990vlba} H. Nessyahu, E. Tadmor, Non-oscillatory central differencing for hyperbolic conservation laws, J. Comput. Phys. (1990).

\bibitem{Zhou:2010gwba} T. Tang, T. Zhou, Convergence Analysis for Stochastic Collocation Methods to Scalar Hyperbolic Equations with a Random Wave Speed, Commun. Comput. Phys, 8.1 (2010) 226-248. doi:10.4208/cicp.060109.130110a.

\bibitem{Tryoen:2010djba} J. Tryoen, O. Le Maitre, M. Ndjinga, A. Ern, Intrusive Galerkin methods with upwinding for uncertain nonlinear hyperbolic systems, J. Comput. Phys. 229 (2010) 6485-6511. doi:10.1016/j.jcp.2010.05.007.

\bibitem{XIU:2009imba} D. Xiu, Fast numerical methods for stochastic computations: a review, Comun. Comput. Phys, 5.2-4 (2009) 242-272. doi:10.1016/j.adhoc.2013.06.001.

\bibitem{Xiu:2010wxba} D. Xiu, Numerical Methods for Stochastic Computations, Princeton University Press, 2010.

\bibitem{XiuKar} D. Xiu and  G.E. Karniadakis, The Wiener-Askey polynomial chaos for stochastic
differential equations. SIAM J. Sci. Comput., 24(2002), 619-644.

\bibitem{Hesthaven:2005iaba} D. Xiu, J.S. Hesthaven, High-Order Collocation Methods for Differential Equations with Random Inputs, SIAM J. Sci. Comput. 27 (2005) 1118-1139. doi:10.1137/040615201.



\bibitem{Tang:2012ibba} T. Zhou, T. Tang, Convergence Analysis for Spectral Approximation to a Scalar Transport Equation with a Random Wave Speed, J. Comput. Math. 30 (2012) 643-656. doi:10.4208/jcm.1206-m4012.


\end{thebibliography}
\end{document}